\theoremstyle{plain}
\newtheorem{theorem}{Theorem}[section]
\newtheorem{proposition}[theorem]{Proposition}
\newtheorem{lemma}[theorem]{Lemma}
\theoremstyle{definition}
\newtheorem{definition}[theorem]{Definition}
\newtheorem{remark}[theorem]{Remark}
\newtheorem{example}[theorem]{Example}
\numberwithin{equation}{theorem}
\newcommand{\ZZ}{\boldsymbol{\mathbb{Z}}}
\newcommand{\cc}[1]{\mathcal{#1}}
\newcommand{\bb}[1]{\mathbb{#1}}
\newcommand{\OO}{\mathcal{O}}
\newcommand\dual{\raise0.2ex\hbox{$\scriptscriptstyle\vee$}}
\newcommand*{\sheafhom}{\mathscr{H}\text{\kern -3pt {\calligra\large om}}\,}
\begin{document}

\selectlanguage{english}

\sloppy

\date{September 2020}
\author[D. Pedchenko]{Dmitrii Pedchenko}
\address{Department of Mathematics, The Pennsylvania State University, University Park, PA 16802}
\email{dzp5326@psu.edu}

\subjclass[2010]{Primary: 46H05 , 46H20. Secondary: 46H25}
\keywords{Arens-Michael functor, Arens-Michael envelope, Ore extension, noncommutative geometry}

\title{The Arens-Michael envelopes of the Jordanian Plane and $U_q(\mathfrak{sl}(2))$}

\begin{abstract}
The Arens-Michael functor in noncommutative geometry is an  analogue of the analytification functor in algebraic geometry: out of the ring of ``algebraic functions'' on a noncommutative space it constructs the ring of ``holomorphic functions'' on it. In this paper, we explicitly compute the Arens-Michael envelopes of the Jordanian plane and the quantum enveloping algebra $U_q(\mathfrak{sl}(2))$ of $\mathfrak{sl}(2)$ for $|q|=1$.
\\

\noindent This is an article version of the author's senior thesis \cite{Ped15} at HSE University from 2015.  \end{abstract}

\maketitle

\tableofcontents

\section{Introduction}

Noncommutative geometry appeared in the second half of the XX century as the  result of rethinking and conceptualizing classic facts about the interplay between geometry and algebra. Oftentimes all essential information about a geometric space (e.g., affine algebraic variety, smooth manifold, topological space) is  contained in a suitably chosen algebra of functions (polynomial, smooth, continuous) on the space. This observation is formalized in a theorem which establishes the anti-equivalence between the category of spaces at hand and the respective category of algebras of functions on such spaces. Sometimes the resulting category of commutative algebras admits an abstract description which a priori does not bind elements of the algebra to functions on any space. To illustrate this point, consider Hilbert's Nullstellensatz which implies that for an algebraically closed field the category of affine algebraic varieties is anti-equivalent to the category of commutative finitely generated reduced algebras. The Gelfand-Naimark Theorem implies that  the category of compact Hausdorff topological spaces is anti-equivalent to the category of commutative $C^*$-algebras.

The basic idea of noncommutative geometry is to view an arbitrary (noncommutative) algebra as an ``algebra of functions on a noncommutave space''. This idea is based on an observation that many important geometric concepts and constructions stated in algebraic terms remain meaningful for noncommutative algebras providing us with the tools and intuition for studying these algebras.

``Noncommutative geometry'' includes several mathematical disciplines which have different research objects but are unified by that aforementioned idea. Noncommutative measure theory studies von Neumann algebras (note that commutative von Neumann algebras are exactly the algebras of essentially bounded measurable functions on measure spaces); noncommutative topology studies  $C^*$-algebras (because of the Gelfand-Naimark theorem mentioned above); noncommutative affine algebraic geometry studies finitely generated algebras; noncommutative differential geometry studies dense subalgebras of  $C^*$-algebras equipped with a special ``differential'' structure.

Notice that an important discipline is missing from this list - noncommutative complex analytic geometry which in the commutative case bridges differential and algebraic geometries. One of the main reasons of why this theory is underdeveloped is that it is unclear which algebras should be considered the noncommutative generalizations of the algebras of holomorphic functions on complex analytic spaces. The ideal starting point for the development of any kind of noncommutate geometry is having a category $\mathscr{A}$ of associative algebras such that the full subcategory of $\mathscr{A}$ consisting of commutative algebras is anti-equivalent to a certain category $\mathscr{C}$ of ``spaces''. In such a situation we may think of algebras belonging to $\mathscr{A}$ as the noncommutative analogues of the spaces belonging to $\mathscr{C}$. Therefore, in the case of noncommutative complex analytc geometry we would like to start with some category consisting of  algebras such that the commutative ones are exactly the algebras of holomorphic functions on complex analytic spaces. As Pirkovskii states in \cite{Pir08}, apparently in full generality such a class of algebras  has  not yet been introduced.

However, we can simplify our problem as follows. Any affine algebraic variety over $\mathbb{C}$ is certainly a complex analytic space. So we can narrow down the category $\mathscr{C}$ of complex analytic spaces to the category of affine algebraic varieties over $\mathbb{C}$ viewed as complex analytic spaces.  It turns out that in this case we have a construction that for each finitely generated $\mathbb{C}$-algebra $A$ (deemed as the ``algebra of regular algebraic functions on a noncommutative affine scheme of finite type'') assigns a new algebra  $\widehat {A}$ (deemed as the ``algebra of holomorphic functions on that scheme'') such that if our algebra was a commutative algebra of regular algebraic functions $A = \mathcal {O}^{alg}(X)$  on an affine scheme $X$ of finite type, then we get the algebra of holomorphic functions on  $X$: $\widehat {A}=\mathcal {O}^{hol}(X)$.

The resulting algebra  $\widehat {A}$ is known as  the Arens-Michael envelope of algebra $A$.  Therefore, we view the Arens-Michael envelopes of finitely generated algebras as the algebras of holomorphic functions on noncommutative affine schemes of finite type. So far, the Arens-Michael envelopes are explicitly known only for  a handful of noncommutative algebras (see \S \ref{examples} and \cite[\S 5]{Pir08b}), and it is an important for the development of the theory and its scope of applicability to grow the body of examples. 

In this paper, we add two algebras to the list of examples: we explicitly compute the Arens-Michael envelopes of the Jordanian plane and the quantum enveloping algebra $U_q(\mathfrak{sl}(2))$ for $|q|=1$ following the recently developed techniques in this area.

Finally, let us mention that this paper is an improved version of the author's senior thesis \cite{Ped15} at the National Research University - Higher School of Economics from 2015.

\subsection*{Organization of the paper} In \S \ref{AM functor}, we review the basic definitions pertaining to the Arens-Michael functor. In \S \ref{examples}, we recall some known examples of algebras for which the Arens-Michal envelope is explicitly known. In \S \ref{theory}, we review the theoretical constructions necessary for our computations following \cite{Pir08b}.

Finally, we present our key results in \S \ref{JP} and \S \ref{EA}. In \S \ref{JP}, we explicitly compute the Arens-Michael envelope of the Jordanian plane. In \S \ref{EA}, we explicitly compute the Arens-Michael envelope of
the quantum enveloping algebra $U_q(\mathfrak{sl}(2))$ for $|q|=1$.

\subsection*{Acknowledgments} We would like to thank Alexei Pirkovskii for inspiring academic advising and helpful discussions.
\section{The Arens-Michael functor}\label{AM functor}

In what follows all vector spaces and algebras are taken over the field of complex numbers $\mathbb{C}$; all algebras are assumed to be associative and unital. The seminorm $\| \cdot \|$ on an algebra $A$ is called {\it submultiplicative} if $\| ab \|\leq\|a\|\|b\|$ for all $a,b\in A$. A complete topological algebra with a topology generated by a family of submultiplicative seminorms is called an {\it Arens-Michael algebra}. We start with our main definitions.

\begin{definition}\label{AM}  Let $A$ be a topological  algebra. The pair $(\widehat{A},\iota_A)$ , consisting of an Arens-Michael algebra  $\widehat {A}$ and a continuous homomorphism $\iota_A: A \to \widehat {A}$, is called {\it the Arens-Michael envelope} of algebra $A$ if for an arbitrary Arens-Michael algebra {\it B} and an arbitrary continuous homomorphism $\varphi : A \to B$, there exists a unique continuous homomorphism $\widehat {\varphi}: \widehat {A} \to B$ such that the following diagram commutes:

\[
\begin{tikzcd}
\widehat {A} \arrow[dashed] {r}{\widehat {\varphi}} & B \\
A \arrow[swap]{ur}{\varphi} \arrow{u}{\iota_A}
\end{tikzcd}
\]
\end{definition}
Arens-Michael envelopes were introduced by Taylor \cite{Tay72}, but here we are using the terminology of Helemskii \cite{Hel89} and Pirkovskii \cite[\S 3]{Pir08b}.

It is clear from the definition that the Arens-Michael envelope is unique up to a unique isomorphism of topological algebras over $A$. Moreover, it always exists \cite{Tay72}, and it can be obtained as the completion of $A$ with respect to all continuous submultiplicative seminorms on $A$. Note that the topology induced by submultiplicative seminorms might be non-Hausdorff so that before taking the completion we should take the quotient by the closure of $\{0\}$. Therefore, the canonical homomorphism  $\iota_A: A \to \widehat {A}$ might have a nontrivial kernel. 

\begin{definition}\label{AM2}
Let $A$ be an algebra without a topology. The \emph{Arens-Michael envelope} of $A$ is the Arens-Michael envelope of $A$ endowed with the strongest locally convex topology in the sense of Definition \ref{AM}.
\end{definition}

Finally, let us note that the association $A \mapsto \widehat {A}$ extends to algebra homomorphisms $A \to B$ so that we obtain a functor from the category of algebras to the category of Arens-Michael algebras {\it (the Arens-Michael functor)}.

\section{Examples of the Arens-Michael envelopes}\label{examples}

Next we discuss some known examples of the Arens-Michael envelopes. 

The next example and proposition justify our assertions from the introduction about the Arens-Michael functor being a noncommutative analogue of the analytification functor in algebraic geometry.

\begin{example} As was noted by Taylor \cite{Tay72}, the Arens-Michael envelope of the polynomial algebra $\mathbb{C}[x_1,...,x_n]={\mathcal {O}}^{alg}(\mathbb{C}^n)$ is the algebra of holomorphic functions $\mathcal {O}^{hol}(\mathbb{C}^n)$ with compact-open topology. 
\end{example}

Pirkovskii  generalized this to statement to affine algebraic varieties.
\begin{proposition}[{\cite[Example 3.6]{Pir08b}}] Let $X$ be an affine algebraic variety over $\mathbb{C}$ and let ${A=\mathcal {O}^{alg}(X)}$ be the algebra of regular algebraic functions on $X$. The Arens-Michael envelope of $A$ is the algebra $\mathcal {O}^{hol}(X_{an})$ of holomophic functions on $X$ when we view $X$ as a complex analytic space, with compact-open topology. The same is true for the affine schemes of finite type over $\mathbb {C}$. 
\end{proposition}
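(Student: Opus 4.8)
The plan is to reduce the general statement to the basic case of affine space $\mathbb{C}^n$ (already recorded in the preceding Example) by using the universal property of the Arens-Michael envelope together with a presentation of $A = \mathcal{O}^{alg}(X)$ as a quotient of a polynomial ring. Concretely, write $X \hookrightarrow \mathbb{C}^n$ as a closed subscheme cut out by an ideal $I \subseteq \mathbb{C}[x_1,\dots,x_n]$, so that $A = \mathbb{C}[x_1,\dots,x_n]/I$. Since the Arens-Michael functor is a left adjoint (Definition \ref{AM}), it preserves cokernels in the appropriate sense; in particular $\widehat{A}$ should be the quotient of $\widehat{\mathbb{C}[x_1,\dots,x_n]} = \mathcal{O}^{hol}(\mathbb{C}^n)$ by the closed two-sided ideal generated by the image of $I$. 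So the first step is to make this ``the Arens-Michael functor commutes with quotients by ideals'' statement precise and cite or prove it: for a closed ideal $J$ in an Arens-Michael algebra the quotient with the quotient topology is again Arens-Michael, and the universal property transfers.

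Second, I would identify the closed ideal of $\mathcal{O}^{hol}(\mathbb{C}^n)$ generated by $I$ with the ideal $\mathcal{I}(X_{an})$ of holomorphic functions vanishing on $X_{an}$ (more precisely, vanishing to the prescribed order when $X$ is a non-reduced scheme). Here one invokes the analytic Nullstellensatz / the coherence results of Oka–Cartan: the analytic ideal sheaf generated by polynomial equations has global sections exactly the closure (in the Fréchet topology of $\mathcal{O}^{hol}(\mathbb{C}^n)$) of the ideal $I \cdot \mathcal{O}^{hol}(\mathbb{C}^n)$, and by Cartan's Theorem A and B this is already closed and equals $\mathcal{I}(X_{an})$. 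Dividing out then gives $\mathcal{O}^{hol}(\mathbb{C}^n)/\mathcal{I}(X_{an}) = \mathcal{O}^{hol}(X_{an})$ with its natural Fréchet (compact-open) topology, using that $X_{an}$ is a closed complex analytic subspace and restriction of holomorphic functions is surjective onto $\mathcal{O}^{hol}(X_{an})$ (again Cartan B). Finally one checks the compatibility of topologies: the quotient topology from $\mathcal{O}^{hol}(\mathbb{C}^n)$ coincides with the compact-open topology on $\mathcal{O}^{hol}(X_{an})$, which follows from the open mapping theorem for Fréchet spaces once surjectivity is known.

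The third step is bookkeeping: verify that the composite $A = \mathbb{C}[x]/I \to \mathcal{O}^{hol}(\mathbb{C}^n)/\mathcal{I}(X_{an}) = \mathcal{O}^{hol}(X_{an})$ is the expected restriction-of-functions map and that it has dense image (polynomials are dense in $\mathcal{O}^{hol}$ on $\mathbb{C}^n$, hence their restrictions are dense on $X_{an}$), and then confirm the universal property directly: given an Arens-Michael algebra $B$ and a continuous homomorphism $\varphi\colon A \to B$, lift along the surjection $\mathbb{C}[x] \surj A$, extend uniquely to $\mathcal{O}^{hol}(\mathbb{C}^n)$ by the known case, observe the extension kills $\mathcal{I}(X_{an})$ by continuity and density, and descend to $\mathcal{O}^{hol}(X_{an})$. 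For affine schemes of finite type (possibly non-reduced) the same argument runs verbatim with $I$ not necessarily radical, once one has the non-reduced analytic Nullstellensatz identifying $\overline{I \cdot \mathcal{O}^{hol}(\mathbb{C}^n)}$ correctly.

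The main obstacle is the analytic-geometry input in the second step: showing that the ideal $I \cdot \mathcal{O}^{hol}(\mathbb{C}^n)$ is \emph{already closed} (so no extra closure is needed) and that dividing by it yields exactly $\mathcal{O}^{hol}(X_{an})$ with the right topology. This is where one genuinely needs Cartan's Theorems A and B (coherence of the ideal sheaf generated by finitely many holomorphic, here polynomial, functions, plus vanishing of $H^1$), rather than anything formal about Arens-Michael algebras; everything else is either the previously established case $\mathbb{C}^n$ or soft functional analysis (open mapping theorem, density, the universal property). I would therefore structure the writeup so that this analytic point is isolated as the one substantive lemma and the rest presented as a short formal deduction.
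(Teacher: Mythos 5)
The paper does not prove this proposition at all — it is quoted verbatim from Pirkovskii \cite[Example 3.6]{Pir08b} — and your outline is essentially the argument given in that reference: reduce to $\mathbb{C}^n$ via a presentation $A=\mathbb{C}[x_1,\dots,x_n]/I$, use that the Arens--Michael functor sends quotients to quotients by the closure of the generated ideal, and then invoke Cartan--Oka/Forster theory to identify $I\cdot\mathcal{O}^{hol}(\mathbb{C}^n)$ as an already-closed ideal whose quotient is $\mathcal{O}^{hol}(X_{an})$ with the compact-open topology. Your plan correctly isolates the Stein-theoretic step as the only non-formal input, so no gap to report.
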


From this proposition we see that  the geometric analytification functor associating to an affine algebraic scheme $X$ a complex analytic space $X_{an}$ corresponds to  the algebraic or functional-analytic Arens-Michael functor when we instead work with functions on those spaces. As we noted in the introduction, the finitely generated noncommutative algebras are the natural candidates for the noncommutative affine schemes of finite type, so  we view the Arens-Michael envelopes of the finitely generated noncommutative algebras as the algebras of holomorphic functions on the noncommutative affine schemes of finite type.

Here is the ``most noncommutative'' example:

\begin{example}[The free algebra] Let $F_n=\mathbb{C}\langle x_1,...,x_n \rangle$ be a free algebra with $n$ generators. For each $k$-tuple $\alpha=(\alpha_1,...,\alpha_k)$ of integers, $1 \leq \alpha_i \leq n$, set $x_\alpha=x_{\alpha_1} \cdot \cdot \cdot  x_{\alpha_k}$  and $| \alpha|=k$. Then each element of $F_n$ is written as a noncommutative polynomial $\sum\nolimits_{|\alpha| \leq N} c_\alpha x_\alpha$. Denote the set of all $\alpha$ as $W_n$.

Taylor \cite{Tay72} showed that 
$$\widehat{F_n}=\{a=\sum \limits_{\alpha \in W_n} c_\alpha x_\alpha \quad : \quad \| a \|_\rho = \sum \limits_{\alpha \in W_n} | c_\alpha| \rho_\alpha < \infty  \quad  \text{for any} \quad \rho>0 \}.$$The topology on $\widehat{F_n}$ is defined by the family of seminorms $\{ \| \cdot \|_\rho  : \rho \in \bb{R}_{>0} \}$. 
\end{example}

\begin{example}[The quantum plane]\label{ex 1} Fix a complex number $q \in \mathbb {C} \setminus \{0\}$. {\it The quantum plane} is an algebra (denoted ${\mathcal {O}}^{alg}_q(\mathbb{C}^2)$)  with two generators $x,y$, subject to a relation $xy=qyx$.  The monomials $x^i y^j$  $(i,j \geq 0)$ form a basis of ${\mathcal {O}}^{alg}_q(\mathbb{C}^2)$ so that this algebra can be viewed as an algebra of polynomials with a ``twisted'' multiplication. 

\thickspace

Denote the Arens-Michael envelope of ${\mathcal {O}}^{alg}_q(\mathbb{C}^2)$ by ${\mathcal {O}}^{hol}_q(\mathbb{C}^2)$, and view it as an algebra of holomorphic functions on the quantum plane. The next result is due to Pirkovskii.
\begin{proposition}[{\cite[Corollary 5.14]{Pir08b}}]

Let $q \in \mathbb{C} \setminus \{0 \}$.

\begin{enumerate}
\item
 If $|q| \geq 1$,  then
$${\mathcal {O}}^{hol}_q(\mathbb{C}^2)=\{a=\sum \limits_{i,j=0}^{\infty} c_{ij} x^i y^j \quad : \quad  \| a \|_\rho = \sum \limits_{i,j=0}^{\infty} |c_{ij}| \rho^{i+j} < \infty\quad  \text{for any} \quad \rho>0 \}.$$

\item If $|q| \leq 1$, \it then

$${\mathcal {O}}^{hol}_q(\mathbb{C}^2)=\{a=\sum \limits_{i,j=0}^{\infty} c_{ij} x^i y^j \quad : \quad \| a \|_\rho = \sum \limits_{i,j=0}^{\infty} |c_{ij}| |q|^{ij} \rho^{i+j} < \infty\quad  \text{for any} \quad \rho>0 \}.$$

\end{enumerate}

In both cases the topology on ${\mathcal {O}}^{hol}_q(\mathbb{C}^2)$  is generated by the family of seminorms $\{\| \cdot \|_\rho : \rho \in \bb{R}_{>0} \}$  and the multiplication is defined by the relation $xy=qyx$.
\end{proposition}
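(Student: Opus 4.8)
The plan is to run the standard two-step computation of an Arens-Michael envelope: produce a concrete family of submultiplicative seminorms that is cofinal among \emph{all} submultiplicative seminorms on $A := \OO^{alg}_q(\mathbb{C}^2)$, and then identify the completion of $A$ with respect to that family. Since $A$ carries the strongest locally convex topology (Definition~\ref{AM2}), every seminorm on it is continuous, so $\widehat A$ is the completion of $A$ in the topology generated by all submultiplicative seminorms; and if some subfamily $\mathcal{P}$ dominates every submultiplicative seminorm up to a constant, that topology is already generated by $\mathcal{P}$, whence $\widehat A$ is the completion of $A$ with respect to $\mathcal{P}$. I would take $\mathcal{P} = \{\,\|\cdot\|_\rho : \rho \in \bb{R}_{>0}\,\}$ with $\|\cdot\|_\rho$ as in the statement (the two formulas coinciding when $|q|=1$); since each $\|\cdot\|_\rho$ is an honest norm on the polynomial algebra $A$, the envelope will automatically be Hausdorff and $\iota_A$ injective.

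First I would verify submultiplicativity. From $xy = qyx$ one gets $y^b x^c = q^{-bc}x^c y^b$, hence the PBW multiplication rule $(x^a y^b)(x^c y^d) = q^{-bc}\,x^{a+c}y^{b+d}$. Substituting this into $\|\cdot\|_\rho$ and comparing with $\|x^a y^b\|_\rho\,\|x^c y^d\|_\rho$, the inequality to be checked reduces, after cancelling the common powers of $\rho$ and $|q|$, to $|q|^{-bc}\le 1$ in the case $|q|\ge 1$ and to $|q|^{ad}\le 1$ in the case $|q|\le 1$, both of which hold because $a,b,c,d\ge 0$. This is exactly where the weight $|q|^{ij}$ is forced when $|q|<1$: without it the norm $\|\cdot\|_\rho$ fails to be submultiplicative.

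Next I would prove cofinality. Given a submultiplicative seminorm $\|\cdot\|$ on $A$, set $r = \max(\|x\|,\|y\|,1)$. Iterating submultiplicativity gives $\|x^iy^j\| \le \|x\|^i\|y\|^j \le r^{i+j}$, which settles the case $|q|\ge 1$: one gets $\|\cdot\| \le C\,\|\cdot\|_r$ with a constant $C$ absorbing $\|1\|$. When $|q|\le 1$ this estimate is too weak, and the trick is to invoke the \emph{reversed} normal form $x^iy^j = q^{ij}\,y^jx^i$, the other consequence of the commutation relation; it yields $\|x^iy^j\| = |q|^{ij}\|y^jx^i\| \le |q|^{ij} r^{i+j}$, so again $\|\cdot\| \le C\,\|\cdot\|_r$ with the weighted norm. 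Hence $\mathcal{P}$ dominates every submultiplicative seminorm and generates the Arens-Michael topology of $A$.

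Finally I would identify the completion of $A$ with respect to $\mathcal{P}$. Since $\|\cdot\|_\rho \le \|\cdot\|_{\rho'}$ for $\rho \le \rho'$, the subfamily $\{\|\cdot\|_n : n\ge 1\}$ is cofinal, so the completion is a Fr\'echet algebra. The remaining work is a routine $\ell^1$-type argument: the coefficient functionals $a\mapsto c_{ij}$ are $\|\cdot\|_\rho$-continuous, so a $\mathcal{P}$-Cauchy sequence converges coefficientwise and its limit still satisfies all the summability conditions, showing that the space $\OO^{hol}_q(\mathbb{C}^2)$ in the statement is complete; truncating power series shows the polynomials are dense in it; and the topology it induces on $A$ is the one generated by $\mathcal{P}$. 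By uniqueness of completions this gives $\widehat A \cong \OO^{hol}_q(\mathbb{C}^2)$, the multiplication being the unique continuous extension of the one on $A$, hence still governed by $xy=qyx$. The step I expect to require the most care is the case $|q|<1$: one must both discover that the weight $|q|^{ij}$ is needed for submultiplicativity and realize that cofinality then cannot be read off from $\|x^iy^j\|\le\|x\|^i\|y\|^j$ directly but has to be extracted from the identity $x^iy^j = q^{ij}y^jx^i$; everything else is bookkeeping, and I would expect the theoretical section to package the submultiplicativity and cofinality checks into a general statement about Arens-Michael envelopes of Ore extensions $R[x;\sigma]$ (here $R=\mathbb{C}[y]$, $\sigma(y)=qy$).
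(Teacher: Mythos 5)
Your argument is mathematically sound, but note first that the paper does not prove this proposition at all: it is quoted verbatim from Pirkovskii (\cite[Corollary 5.14]{Pir08b}), and the route the paper (and Pirkovskii) would take is the Ore-extension machinery of \S\ref{Ore} --- realize $\OO^{alg}_q(\mathbb{C}^2)$ as $\mathbb{C}[x][y;\alpha,0]$ with $\alpha(x)=q^{-1}x$, compute $\widehat{\mathbb{C}[x]}=\OO(\mathbb{C})$, check that $\widehat{\alpha}$ is $m$-localizable for a suitable family of seminorms, and invoke Theorem \ref{main theorem} to identify the envelope with the analytic Ore extension $\OO(\mathbb{C},\OO(\mathbb{C});\widehat{\alpha},0)$. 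Your proof instead does everything by hand, and all the computations check out: the multiplication rule $(x^ay^b)(x^cy^d)=q^{-bc}x^{a+c}y^{b+d}$ is correct, submultiplicativity does reduce to $|q|^{-bc}\le 1$ (resp.\ $|q|^{ad}\le 1$) in the two cases, and the cofinality estimate $\|x^iy^j\|\le\|x\|^i\|y\|^j\le r^{i+j}$, supplemented for $|q|<1$ by the reversed normal form $x^iy^j=q^{ij}y^jx^i$ giving $\|x^iy^j\|\le|q|^{ij}r^{i+j}$, is exactly the right idea and is where the weight $|q|^{ij}$ is genuinely forced. In effect you have unpacked the general theorem into the three steps (submultiplicativity, domination of an arbitrary submultiplicative seminorm, identification of the completion) that the paper itself uses in its hands-on proof for the Jordanian plane. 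What your elementary route buys is self-containedness and a transparent explanation of the asymmetry between $|q|\ge1$ and $|q|\le1$; what the Ore-extension route buys is that the completeness, density, and continuous-extension-of-multiplication bookkeeping you defer to ``a routine $\ell^1$-type argument'' is done once and for all in \cite[Theorem 5.17]{Pir08b}, and that the same template then handles $U_q(\mathfrak{sl}(2))$ and the Jordanian plane. The only points worth tightening are the constant absorbing $\|1\|$ (a submultiplicative seminorm need not satisfy $\|1\|=1$, which you do flag) and an explicit remark that the product of two monomials is again a scalar multiple of a single basis monomial, which is what makes the $\ell^1$-estimate for $\|ab\|_\rho$ immediate.
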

\end{example}

\begin{example}\label{ex 2} Consider the universal enveloping algebra of the Lie algebra $\mathfrak{g}$ with basis $\{x,y\}$ and the commuting relation $[x,y]=y$. Due to the Poincare - Birkhoff - Witt theorem, the universal enveloping algebra $U(\mathfrak{g})$ can be viewed as a polynomial algebra with a ``twisted'' multiplication.  As shown in \cite[Example 5.1]{Pir08b}
$$\widehat{U}(\mathfrak{g}) = \{a=\sum \limits_{i,j=0}^{\infty} c_{ij} x^i y^j \quad : \quad  \sum \limits_{i=0}^{\infty} |c_{ij}| \rho^{i} < \infty, \quad  \forall j \in \mathbb{Z}_+, \quad \forall \rho >0 \}.$$
The topology of $\widehat{U}(\mathfrak{g})$ is generated by the family of seminorms 

$$\{ \| \cdot \|_{n, \rho} \quad : \quad \|\sum \limits_{i,j=0}^{\infty} c_{ij} x^i y^j \|_{n,\rho} = \sum \limits_{j=0}^{n} \sum \limits_{i=0}^{\infty} |c_{ij}| \rho^{i} < \infty,   \quad n \in \mathbb{Z}_+, \quad \rho \in \bb{R}_{>0} \}.$$
\end{example}

In the previous examples, the Arens-Michael envelopes of polynomial algebras with a ``twisted'' multiplications happened to be algebras of ``noncommutative power series'', as one would expect by comparing to the commutative case. Interestingly, the next example (due to Taylor \cite{Tay72}) shows that this is not always the case.

\begin{example}[The universal enveloping algebra of a semisimple Lie algebra]\label{lie matrix} Suppose $\mathfrak{g}$ is a semisimple Lie algebra. Every finite-dimensional irreducible representation $\pi_\lambda$ of algebra $\mathfrak{g}$ extends to a homomorphism  $$\pi_\lambda : U(\mathfrak{g}) \to M_{d_\lambda}(\mathbb{C}) \quad ({d_\lambda} = dim \ \pi_\lambda) .$$ If we denote the set of equivalence classes of irreducible finite-dimensional representations of $\mathfrak{g}$ by  $\widehat {\mathfrak{g}}$, we get a homomorphism 
$$\qquad \prod \limits_{\lambda \in \widehat {\mathfrak{g}}} \pi_\lambda : U(\mathfrak{g}) \to \prod \limits_{\lambda \in \widehat {\mathfrak{g}}} {M_{d_\lambda}}(\mathbb{C}).$$ 
The algebra   $\prod \limits_{\lambda \in \widehat {\mathfrak{g}}} {M_{d_\lambda}}(\mathbb{C})$ with the product topology and the homomorphism $\prod \limits_{\lambda \in \widehat {\mathfrak{g}}} \pi_\lambda$ form the Arens-Michael envelope of $U(\mathfrak{g})$.

This example is a bit discouraging since contrary to the above examples, this time the Arens-Michael envelope looks completely different from the initial algebra (for example, $U(\mathfrak{g})$ is an integral domain but  $\prod \limits_{\lambda \in \widehat {\mathfrak{g}}} {M_{d_\lambda}}(\mathbb{C})$ is not). Nonetheless, the canonical homomorphism $A \to \widehat{A}$ is injective (as it was in all other examples so far). 
\end{example}

The next example shows the worst possible situation.
\begin{example}[Weyl algebra] Weyl algebra $A$ is an algebra with two generators $x, \partial$ with the commuting relation $[\partial,x]=1$. It is well-known that in a non-zero normed algebra there are no elements with this commuting relation. Therefore $\widehat{A} =0$ and the canonical homomorphism is not injective.

It is interesting to note that if we quantize Weyl algebra by taking the commuting relation to be $$\partial x - q x \partial =1 \quad (q \neq 0,1),$$ the resulting Arens-Michael envelope would again be  the algebra of ``noncommutative'' polynomials (see \cite[Corollary 5.19]{Pir08b}). 

\end{example}

For more examples, see \cite{Pir08, Pir08b}.

\section{Theoretical constructions}\label{theory} In this section, we collect the  theoretical facts necessary for our computations following \cite{Pir04} and \cite{Pir08b}. We will be referring to a complete, Hausdorff, locally convex topological algebra with jointly continuous multiplication as  a $\widehat {\otimes}$-\emph{algebra}.

\subsection{The Arens-Michael envelopes and tensor product}\label{tensor}
First, we recall how to describe the topology on the projective tensor product of two $\widehat {\otimes}$-modules.

\begin{proposition}[{\cite[Proposition 2.3 (vi]{Pir08b}}]\label{tensor 1} Suppose $A$ is a $\widehat {\otimes}$-algebra, $X$ is a right $A$-$\widehat {\otimes}$-module, $Y$ is left $A$-$\widehat {\otimes}$-module. Furthermore, suppose that both $X$ and $Y$ have countable or finite dimension and the topology on $X$ and $Y$ is the strongest locally convex topology. Then the algebraic tensor product $X \otimes_A Y$ with the strongest locally convex topology coincides with the projective tensor product $X \widehat {\otimes}_A Y$.  

\end{proposition}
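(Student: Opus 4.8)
The plan is to realize both $X \otimes_A Y$ and $X \widehat{\otimes}_A Y$ as one and the same quotient of $X \widehat{\otimes} Y$, and then to exploit the rigidity of the strongest locally convex topology. Write $N \subseteq X \otimes Y$ for the linear span of the elements $xa \otimes y - x \otimes ay$ with $x \in X$, $y \in Y$, $a \in A$; then algebraically $X \otimes_A Y = (X \otimes Y)/N$, while $X \widehat{\otimes}_A Y$ is obtained from $X \widehat{\otimes} Y$ by quotienting by the closure $\overline{N}$ of (the image of) $N$ and then completing. So what must be shown is that the canonical linear bijection $X \otimes_A Y \to X \widehat{\otimes}_A Y$ is a homeomorphism once the left-hand side is given the strongest locally convex topology.

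Two ingredients will do the work. The first is the module-free case $A = \mathbb{C}$ of the very statement we are proving, which is standard: since $X$, $Y$, and hence $X \otimes Y$, have at most countable dimension, $X \widehat{\otimes} Y$ is simply $X \otimes Y$ equipped with the strongest locally convex topology — in particular this space is already complete. The second is the collection of elementary rigidity properties of the strongest locally convex topology on a vector space $V$: (a) every linear subspace of $V$ is closed (it is the kernel of the continuous projection along an algebraic complement); (b) $V$ is complete (it is a locally convex direct sum of copies of $\mathbb{C}$); (c) for any subspace $M$, the quotient topology on $V/M$ is again the strongest locally convex topology, since every linear map out of $V$ into a locally convex space is continuous, so the quotient map $V \to V/M$ is continuous for the finest locally convex topology on $V/M$, which forces the (a priori coarser) quotient topology to agree with it.

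Assembling these: by the first ingredient $X \widehat{\otimes} Y$ carries the strongest locally convex topology, so by (a) the subspace $N$ is already closed, $\overline{N} = N$, and by (b) and (c) the quotient $(X \widehat{\otimes} Y)/N$ is complete and again carries the strongest locally convex topology. Hence the completion in the construction of $X \widehat{\otimes}_A Y$ is vacuous, and $X \widehat{\otimes}_A Y = (X \widehat{\otimes} Y)/N = (X \otimes Y)/N$ with its quotient topology. On the other side, $X \otimes_A Y = (X \otimes Y)/N$ with the strongest locally convex topology, which by (c), applied to $V = X \otimes Y = X \widehat{\otimes} Y$, is exactly that same quotient topology. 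The two descriptions coincide, and the canonical bijection is the desired topological isomorphism.

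I expect the one genuinely delicate point to be the step hidden in the phrase ``the completion is vacuous'': in general the quotient of a complete but non-metrizable locally convex space by a closed subspace need not be complete, so completeness of $X \widehat{\otimes} Y$ alone does not suffice — one really needs properties (b) and (c), i.e. that the strongest locally convex topology is both complete and inherited by quotients. This is also the precise place where the hypothesis of countable dimension enters: it is what guarantees, via the case $A = \mathbb{C}$, that the projective tensor topology on $X \otimes Y$ is not strictly coarser than the strongest locally convex one (as it may well be in uncountable dimension), which is exactly what places $N$ inside $X \widehat{\otimes} Y$ as a closed subspace of a space carrying the strongest locally convex topology.
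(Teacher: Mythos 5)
Your argument is correct. The paper itself gives no proof of this proposition --- it is quoted verbatim from Pirkovskii [Pir08b, Proposition 2.3(vi)] --- and your proof is essentially the standard one underlying that reference: reduce to the case $A=\mathbb{C}$ (where countable dimension forces the projective tensor topology on $X\otimes Y$ to be the strongest locally convex one, hence complete), and then use that the strongest locally convex topology makes every subspace closed, passes to quotients, and yields complete spaces, so that $\overline{N}=N$ and the completion step in forming $X\widehat{\otimes}_A Y$ is vacuous. You also correctly isolate the two genuinely delicate points: that completeness of the quotient does not follow from completeness of $X\widehat{\otimes}Y$ alone, and that countability of the dimension is exactly what is needed for the base case.
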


The next proposition shows that the Arens-Michael envelope of the projective tensor product of two $\widehat {\otimes}$-algebras can be computed as the projective tensor product of  their Arens-Michael envelopes.

\begin{proposition}[{\cite[Proposition 6.4]{Pir04}}]\label{tensor 2} Let $A,B$ be $\widehat {\otimes}$-algebras. Then there exists a topological algebra isomorphism $$(A \widehat{\otimes} B)^{\widehat{ \ }} \cong \widehat{A} \widehat{\otimes} \widehat{B}.$$
\end{proposition}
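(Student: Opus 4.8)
The plan is to prove this by the standard route: exhibit $\widehat{A} \widehat{\otimes} \widehat{B}$, together with the morphism $\iota_A \widehat{\otimes} \iota_B \colon A \widehat{\otimes} B \to \widehat{A} \widehat{\otimes} \widehat{B}$ obtained by functoriality of $\widehat{\otimes}$, as a solution to the universal problem of Definition \ref{AM} for the algebra $A \widehat{\otimes} B$. Since the Arens-Michael envelope is unique up to a unique topological algebra isomorphism over $A \widehat{\otimes} B$, this yields the asserted isomorphism.

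First I would check that $\widehat{A} \widehat{\otimes} \widehat{B}$ really is an Arens-Michael algebra. If the topologies of $\widehat{A}$ and $\widehat{B}$ are generated by submultiplicative seminorms $\{p_i\}$ and $\{q_j\}$, then the topology of the completed projective tensor product is generated by the tensor seminorms $p_i \widehat{\otimes} q_j$; a short computation on elementary tensors, using the cross-seminorm property, shows each $p_i \widehat{\otimes} q_j$ is submultiplicative for the tensor-product multiplication. Completeness is built into the definition of $\widehat{\otimes}$, and joint continuity of the multiplication on $\widehat{A} \widehat{\otimes} \widehat{B}$ holds as for any projective tensor product of $\widehat{\otimes}$-algebras; hence $\widehat{A} \widehat{\otimes} \widehat{B}$ is an Arens-Michael $\widehat{\otimes}$-algebra.

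The algebraic heart of the argument is the following description of homomorphisms out of a projective tensor product: for any $\widehat{\otimes}$-algebra $C$, continuous homomorphisms $A \widehat{\otimes} B \to C$ are in bijection with pairs $(f,g)$ of continuous homomorphisms $f \colon A \to C$, $g \colon B \to C$ whose ranges commute, via $\varphi \mapsto (\varphi(\cdot \otimes 1), \varphi(1 \otimes \cdot))$ in one direction and $(f,g) \mapsto (a \otimes b \mapsto f(a)g(b))$ in the other (the latter map is well-defined and continuous because $(a,b) \mapsto f(a)g(b)$ is jointly continuous and bilinear, hence factors through $A \widehat{\otimes} B$, and it is multiplicative exactly because the ranges of $f$ and $g$ commute). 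Given now an Arens-Michael algebra $C$ and a continuous homomorphism $\varphi \colon A \widehat{\otimes} B \to C$, decompose it into such a commuting pair $(f,g)$, apply the universal property of the envelopes of $A$ and of $B$ to obtain $\widehat{f} \colon \widehat{A} \to C$ and $\widehat{g} \colon \widehat{B} \to C$, note that $\widehat{f}$ and $\widehat{g}$ again have commuting ranges (the relevant commutators vanish on the dense subalgebras $\iota_A(A)$ and $\iota_B(B)$ and the multiplication of $C$ is jointly continuous), and reassemble $(\widehat{f},\widehat{g})$ into a continuous homomorphism $\widehat{\varphi} \colon \widehat{A} \widehat{\otimes} \widehat{B} \to C$. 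Evaluating on elementary tensors $a \otimes b$ shows $\widehat{\varphi} \circ (\iota_A \widehat{\otimes} \iota_B) = \varphi$, and uniqueness of such a $\widehat{\varphi}$ follows since $\iota_A(A) \otimes \iota_B(B)$ spans a dense subspace of $\widehat{A} \widehat{\otimes} \widehat{B}$.

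The main obstacle is not any single deep step but assembling the functional-analytic bookkeeping: submultiplicativity of the tensor seminorms, and — more delicately — the density and joint-continuity arguments used both to lift ``commuting ranges'' along $\iota_A$ and $\iota_B$ and to conclude uniqueness of $\widehat{\varphi}$. These are routine once one is careful about which subalgebras are dense and about the \emph{joint} (not merely separate) continuity of multiplication in the algebras involved, and none of them needs more than the universal properties already established and standard properties of $\widehat{\otimes}$. Alternatively, the whole argument can be packaged via Yoneda: the displayed bijections give a natural isomorphism $\Hom\bigl((A \widehat{\otimes} B)^{\widehat{ \ }}, C\bigr) \iso \Hom\bigl(\widehat{A} \widehat{\otimes} \widehat{B}, C\bigr)$ for $C$ ranging over Arens-Michael algebras, and since both $(A \widehat{\otimes} B)^{\widehat{ \ }}$ and $\widehat{A} \widehat{\otimes} \widehat{B}$ are Arens-Michael algebras, this forces the desired topological algebra isomorphism.
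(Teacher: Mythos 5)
Your argument is correct: it is the standard universal-property proof (decomposing continuous homomorphisms out of $A \widehat{\otimes} B$ into commuting pairs, lifting each factor through its envelope, and reassembling), which is essentially the argument of the cited source \cite[Proposition 6.4]{Pir04}. The paper itself states this proposition without proof, simply quoting that reference, so there is nothing further to compare against.
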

In other words, the operations of taking the Arens-Michael envelope and taking the projective tensor product can be interchanged.

\subsection{The Arens-Michael envelopes and Ore extensions}\label{Ore}
The computation of the Arens-Michael envelopes of many polynomial algebras (including Examples \ref{ex 1} and \ref{ex 2}) is greatly facilitated by a theoretical construction known as the {\it Ore extension}.

\subsubsection{Algebraic Ore extensions} First, we consider a purely algebraic construction.

\begin{definition}\label{twisted mult} Let $R$ be an associative $\bb{C}$-algebra (without a topology) and $\alpha: R \to R$ an algebra endomorphism. A $\bb{C}$-linear map $\delta :R \to R$ is called $\alpha$-{\it differentiation} if $$\delta (ab)=\delta(a)b+\alpha(a)\delta(b)$$ for any $a,b \in R$. The {\it Ore extension} $R[z;\alpha, \delta]$ is a noncommutative algebra obtained by endowing the left $R$-module of polynomials $\sum \limits_{i=0}^{n} r_i z^i$ with a ``twisted''  multiplication with a relation \begin{equation*} zr=\alpha(r)z+\delta(r)\end{equation*} for $r \in R$. Note that the natural inclusions  $R \hookrightarrow R[z;\alpha, \delta]$ and $\mathbb{C}[z] \hookrightarrow R[z;\alpha, \delta]$ become algebra homomorphisms.
\end{definition}

Let us also recall a useful formula describing multiplication in $R[z; \alpha, \delta]$. For any $k, n \in \ZZ_{>0}$ with $k \leq n$, let $S_{n,k}: R \to R$ denote an operator defined as the sum of all $\binom{n}{k}$ different compositions of $k$ differentiations $\delta$ and $n-k$ homomorpisms $\alpha$. Then for any $r\in R$ we have the following formula for how to commute $z^n$ and $r$ (see \cite[\S 4.1]{Pir08b}: \begin{equation}\label{commute}z^nr=\sum_{k=0}^{n}S_{n,k}(r)z^{n-k}.\end{equation}

Turning back to our examples, we see that the quantum plane from Example \ref{ex 1} is the Ore extension $\mathbb{C}[x][y; \alpha, 0]$, where $\alpha (x)=q^{-1}x$ and the commuting relation becomes $yx=q^{-1}xy$. The universal enveloping algebra $U(\mathfrak{g})$ from Example \ref{ex 2} is the Ore extension $\mathbb{C}[y][x; id, y \frac {d} {dy}]$, and the commuting relation becomes $xy=yx+y$.

\subsubsection{Analytic Ore extensions}Next we consider a locally convex counterpart of the algebra $R[z;\alpha, \delta]$ - an analytical Ore extension ${\mathcal {O}}(\mathbb{C}, R; \alpha, \delta)$ - and state the theorem telling us the conditions under which the algebra ${\mathcal {O}}(\mathbb{C}, R; \alpha, \delta)$ (or some variant of it) becomes the Arens-Michael envelope of $R[z;\alpha, \delta]$. Below we explain the key steps in the construction of ${\mathcal {O}}(\mathbb{C}, R; \alpha, \delta)$. This theoretical framework is explained in detail in \cite{Pir08b}.

First, we  recall the following two technical definitions.

\begin{definition}[{\cite[Definition 4.1]{Pir08b}}] Let $E$ be a vector space and let $\cc{T}$ be a family of linear operators on $E$. A seminorm on $E$ is $\cc{T}$-\emph{stable} if for any $T \in \cc{T}$ there exists $C>0$ such that $$\| Tv \|\leq C \|v\|$$ for every $v \in E$.
\end{definition}

\begin{definition}[{\cite[Definition 4.2]{Pir08b}}]\label{local} Let $E$ be a locally convex topological space. A family $\cc{T}$ of linear operators on $E$ is called \emph{localizable} if the topology on $E$ can be defined by a family of $\cc{T}$-stable seminorms. A single operator $T$ is called \emph{localizable} if the singleton family $\cc{T}=\{T\}$ is localizable. 
\end{definition}

Let now $R$ be a $\widehat {\otimes}$-algebra equipped with a localizable endomorphism $\alpha: R \to R$  and a localizable differentiation $\delta : R \to R$. The next two lemmas will show that we can equip the space $\cc{O}(\bb{C}, R)$ of $R$-valued entire functions with a ``twisted'' multiplication which coincides with the multiplication on the Ore extension $R[z; \alpha, \delta]$ when we restrict to the polynomial subspace in $\cc{O}(\bb{C},R).$ Recall that $\cc{O}(\bb{C},R)$ is isomorphic to the projective tensor product $R \widehat{\otimes}\cc{O}(\bb{C})$ both as a locally convex topological space and as a left $R$-$\widehat{\otimes}$-module. Explicitly, for any family of seminorms $\{\| \cdot \|_{\lambda} \}_{\lambda \in \Lambda}$ defining the topology on $R$, the space $\OO(\bb{C},R)$ is described as convergent Taylor series$$\{f(z)=\sum_n c_n z^n \quad : \quad c_n\in R, \quad \| f\|_{\lambda, \rho} < \infty \quad \text{for any} \quad \lambda \in \Lambda, \rho>0 \},$$where $\| f\|_{\lambda, \rho}=\sum_{n=0}^{\infty}\|c_n\|_{\lambda} \ \rho^n.$ The topology on $\cc{O}(\bb{C},R)$ is defined by the family of seminorms $$\{\| \cdot \|_{\lambda, \rho} \ : \ \lambda \in \Lambda, \ \rho \in \bb{R}_{>0} \}.$$

Consider the inclusion of locally convex topological vector spaces $$R[z; \alpha, \delta] \hookrightarrow \OO(\bb{C},R),$$where $R[z; \alpha, \delta]$ is equipped with the ``twisted multiplication'' from Definition \ref{twisted mult} and the induced topology from $\OO(\bb{C},R)$. The next lemma shows that we can extend \eqref{commute} from the dense subspace $R[z; \alpha, \delta]$ to the whole space $\OO(\bb{C},R)$.

\begin{lemma}[{\cite[Lemma 4.2]{Pir08b}}] Suppose $R$ is a $\widehat {\otimes}$-algebra, $\alpha: R \to R$ - a localizable endomorphism and $\delta : R \to R$ - a localizable differentiation.   Then there exists a unique continuous linear map 
$$\tau: \mathcal{O}(\mathbb{C}) \widehat{\otimes} R \to R \widehat{\otimes}\mathcal{O}(\mathbb{C}),$$ such that
$$\tau(z^n \otimes r)= \sum \limits_{k=0}^{n} S_{n,k}(r) \otimes z^{n-k} \quad \text{for all} \quad r \in R \quad \text{and} \quad  n \in \mathbb{Z}_{\geq 0}.$$ 
\end{lemma}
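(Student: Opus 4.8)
The plan is to construct the map $\tau$ by first defining it on the algebraic level and then proving that it extends continuously. First I would recall that $\mathcal{O}(\mathbb{C}) \widehat{\otimes} R$ contains the dense subspace $\mathbb{C}[z] \otimes R$ (the polynomial subspace, which is dense because polynomials are dense in $\mathcal{O}(\mathbb{C})$ and the projective tensor product of dense subspaces is dense), and on this subspace the assignment $z^n \otimes r \mapsto \sum_{k=0}^n S_{n,k}(r) \otimes z^{n-k}$ is already forced on us and extends by bilinearity to a well-defined linear map $\tau_0 \colon \mathbb{C}[z] \otimes R \to R \otimes \mathbb{C}[z] \subseteq R \widehat{\otimes}\mathcal{O}(\mathbb{C})$. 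Uniqueness of $\tau$ is then immediate: any continuous extension of $\tau_0$ is unique since $\mathbb{C}[z] \otimes R$ is dense in $\mathcal{O}(\mathbb{C}) \widehat{\otimes} R$ and $R \widehat{\otimes}\mathcal{O}(\mathbb{C})$ is Hausdorff. So the entire content of the lemma is the existence, i.e. the continuity estimate for $\tau_0$.

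For continuity, I would argue that it suffices to bound $\| \tau_0(g) \|_{\lambda, \rho}$ in terms of finitely many seminorms $\| g \|_{\mu, \sigma}$ of the source, where the seminorms on $\mathcal{O}(\mathbb{C}) \widehat{\otimes} R \cong \mathcal{O}(\mathbb{C}, R)$ are the $\| \cdot \|_{\lambda, \rho}$ described in the excerpt. The key input is that $\alpha$ and $\delta$ are localizable, so for a fixed $\lambda$ I may choose a single seminorm $\| \cdot \|_{\mu}$ (refining $\| \cdot \|_{\lambda}$) that is simultaneously $\{\alpha\}$-stable and $\{\delta\}$-stable, i.e. $\|\alpha(r)\|_{\mu} \le C\|r\|_{\mu}$ and $\|\delta(r)\|_{\mu} \le C\|r\|_{\mu}$ for some constant $C > 0$ and all $r$. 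Since $S_{n,k}$ is a sum of $\binom{n}{k}$ compositions of $k$ copies of $\delta$ and $n-k$ copies of $\alpha$, each such composition is bounded in $\|\cdot\|_\mu$ by $C^n$, hence $\|S_{n,k}(r)\|_{\mu} \le \binom{n}{k} C^n \|r\|_{\mu}$. Writing a general element as $g = \sum_n r_n \otimes z^n$ (viewing it in $\mathcal{O}(\mathbb{C}, R)$ with coefficients $r_n \in R$), one computes
\[
\tau_0(g) = \sum_n \sum_{k=0}^n S_{n,k}(r_n) \otimes z^{n-k} = \sum_m \Big( \sum_{n \ge m} S_{n, n-m}(r_n) \Big) \otimes z^m,
\]
and then
\[
\| \tau_0(g) \|_{\lambda, \rho} \le \sum_m \rho^m \sum_{n \ge m} \binom{n}{n-m} C^n \|r_n\|_{\mu} = \sum_n C^n \|r_n\|_{\mu} \sum_{m=0}^n \binom{n}{m} \rho^m = \sum_n C^n (1+\rho)^n \|r_n\|_{\mu},
\]
which is exactly $\| g \|_{\mu, \sigma}$ for $\sigma = C(1+\rho)$. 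This shows $\tau_0$ is continuous from $\mathcal{O}(\mathbb{C}, R)$ with seminorm $\| \cdot \|_{\mu, \sigma}$ into itself with seminorm $\| \cdot \|_{\lambda, \rho}$, hence extends (by density and completeness of the target) to the desired continuous linear $\tau$.

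The main obstacle, and the only genuinely substantive step, is the combinatorial/analytic estimate on $S_{n,k}$ together with the bookkeeping of summing over $n$ and $m$: one has to be careful that the localizability hypothesis is used correctly to get a single $\mu$ handling both $\alpha$ and $\delta$ with a common constant (taking the max of the two constants and of $1$), and that the rearrangement of the double sum is justified by absolute convergence — which is precisely what the finiteness of $\| g \|_{\mu, \sigma}$ for all $\sigma$ guarantees. Everything else (uniqueness, density, linearity, the identification $\mathcal{O}(\mathbb{C}, R) \cong \mathcal{O}(\mathbb{C}) \widehat{\otimes} R$) is formal and follows from the description of $\mathcal{O}(\mathbb{C}, R)$ recalled just before the lemma and from standard properties of the projective tensor product.
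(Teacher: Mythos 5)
The paper does not prove this lemma at all --- it is imported verbatim as \cite[Lemma 4.2]{Pir08b} --- so there is no in-paper argument to compare against; judged on its own, your proof is correct and is essentially the standard one: reduce to the dense polynomial subspace, bound each of the $\binom{n}{k}$ compositions inside $S_{n,k}$ by $C^n$ using a stable seminorm, and collapse the double sum via the binomial theorem to get $\|\tau_0(g)\|_{\lambda,\rho}\le \mathrm{const}\cdot\|g\|_{\mu,C(1+\rho)}$, after which uniqueness and extension are formal. The one point to tighten is the step where you ``choose a single seminorm $\|\cdot\|_{\mu}$ that is simultaneously $\{\alpha\}$-stable and $\{\delta\}$-stable'': separate localizability of $\alpha$ and of $\delta$ does not by itself produce a single defining family of seminorms stable under both (the two defining families need not admit a common stable refinement). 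The hypothesis actually needed, and the one used in \cite{Pir08b} (compare the later statement requiring the \emph{set} $\{\alpha,\delta\}$ to be $m$-localizable), is that the family $\{\alpha,\delta\}$ is localizable; under that reading your argument goes through without change, since a $\{\alpha,\delta\}$-stable seminorm gives the common constant $C=\max\{C_\alpha,C_\delta,1\}$ directly, and the rearrangement of the double series is justified by positivity of all terms.
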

 
Now set $A=\OO(\bb{C},R) \cong R \widehat{\otimes} \mathcal{O} (\mathbb{C})$. Define the multiplication map $m_A:A \widehat{\otimes} A \to A$ as the composition
$$R \widehat{\otimes} \mathcal{O}(\mathbb{C}) \widehat{\otimes} R \widehat{\otimes} \mathcal{O}(\mathbb{C}) \xrightarrow {\bf {1}_{R} \otimes \tau \otimes \bf {1}_{ \mathcal{O}(\mathbb{C}) }} R \widehat{\otimes}R\widehat{\otimes} \mathcal{O}(\mathbb{C})\widehat{\otimes} \mathcal{O}(\mathbb{C}) \xrightarrow {m_R \widehat{\otimes} m_{\mathcal{O}(\mathbb{C})}}R \widehat{\otimes} \mathcal{O}(\mathbb{C}).$$

\begin{proposition}[{\cite[Proposition 4.3]{Pir08b}}] The map  $m_A:A \widehat{\otimes} A \to A$ turns $A=\OO(\bb{C},R)$ into a $\widehat {\otimes}$-algebra, such that the inclusion map $ i: R[z;\alpha, \delta] \hookrightarrow \OO(\bb{C},R) $ is an algebra homomorphism. 
\end{proposition}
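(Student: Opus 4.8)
The plan is to verify the three required properties of $m_A$ in turn, reducing everything to statements about the algebraic Ore extension $R[z;\alpha,\delta]$ sitting densely inside $A$. First I would check that $i$ is an algebra homomorphism. By construction, $\tau$ restricts on the polynomial subspace $\mathbb{C}[z]\otimes R$ to the map $z^n\otimes r \mapsto \sum_{k=0}^n S_{n,k}(r)\otimes z^{n-k}$, which by formula \eqref{commute} is exactly the rule for commuting $z^n$ past $r$ in $R[z;\alpha,\delta]$; hence the composition defining $m_A$, when restricted to $R[z;\alpha,\delta]\widehat{\otimes} R[z;\alpha,\delta]$, computes precisely the Ore product $\left(\sum r_i z^i\right)\left(\sum s_j z^j\right)$. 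So $m_A\circ(i\otimes i)$ agrees with $i\circ m_{R[z;\alpha,\delta]}$ on the algebraic tensor product.

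Next, associativity and unitality. The unit is $1_R\otimes 1\in R\widehat{\otimes}\mathcal{O}(\mathbb{C})$; that $m_A$ behaves correctly against it is immediate from the formula for $\tau$ at $n=0$ together with the unitality of $m_R$ and $m_{\mathcal{O}(\mathbb{C})}$. For associativity I would argue by density and continuity: $m_A$ is jointly continuous (it is a composition of continuous maps — $\tau$ is continuous by the preceding lemma, and $m_R$, $m_{\mathcal{O}(\mathbb{C})}$ are jointly continuous since $R$ and $\mathcal{O}(\mathbb{C})$ are $\widehat{\otimes}$-algebras), and $R[z;\alpha,\delta]$ is dense in $A=\mathcal{O}(\mathbb{C},R)$ because polynomials are dense in $\mathcal{O}(\mathbb{C})$ and the algebraic tensor product $R\otimes\mathbb{C}[z]$ is dense in $R\widehat{\otimes}\mathcal{O}(\mathbb{C})$. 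Both iterated products $m_A\circ(m_A\otimes 1)$ and $m_A\circ(1\otimes m_A)$ are continuous maps $A\widehat{\otimes} A\widehat{\otimes} A\to A$ (using that $\widehat{\otimes}$ is associative and that $-\widehat{\otimes} A$ preserves the relevant continuity), and by the first step they agree on the dense subspace $R[z;\alpha,\delta]^{\otimes 3}$, where associativity is just associativity of the algebra $R[z;\alpha,\delta]$. Hence they coincide everywhere, and $A$ is a $\widehat{\otimes}$-algebra.

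The main obstacle is the joint continuity / well-definedness bookkeeping: one must be sure that the three-fold product maps are genuinely continuous on the completed triple projective tensor product so that the density argument applies, which rests on the fact that $\widehat{\otimes}$ is associative and functorial and that $\tau$ tensored with identities remains continuous — this is routine but is the only place where some care with the locally convex topologies is needed. Everything else is a translation of the already-established algebraic identity \eqref{commute} through the completion. I would therefore organize the write-up as: (1) identify $m_A$ on $R[z;\alpha,\delta]$ with the Ore product via \eqref{commute}; (2) note $R[z;\alpha,\delta]$ is dense and $m_A$ jointly continuous; (3) conclude associativity, unitality, and the homomorphism property of $i$ by continuity from the dense algebraic core.
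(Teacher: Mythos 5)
Your proposal is correct and is essentially the standard argument for this proposition (the paper itself only quotes it from \cite{Pir08b} without proof): identify $m_A$ with the Ore product on the dense subspace $R\otimes\mathbb{C}[z]\subset R\,\widehat{\otimes}\,\mathcal{O}(\mathbb{C})$ via the defining formula for $\tau$ and \eqref{commute}, then transport associativity, unitality, and the homomorphism property of $i$ by continuity. The points you flag as routine really are routine --- continuity of the iterated products on the triple projective tensor product follows from functoriality and associativity of $\widehat{\otimes}$, and the right-unit axiom needs only the observation that $\delta(1)=0$, hence $\tau(z^n\otimes 1_R)=1_R\otimes z^n$ (or, alternatively, it too follows by density).
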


The last proposition allows us to give the following definition.

\begin{definition}[{\cite[Definition 4.3]{Pir08b}}] The algebra $A=R \widehat{\otimes} \mathcal{O}(\mathbb{C})$ with the above multiplication map will be denoted ${\mathcal {O}}(\mathbb{C}, R; \alpha, \delta)$ and called  {\it the analytical Ore extension} of algebra $R$. 
\end{definition}
Note that ${\mathcal {O}}(\mathbb{C}, R; \alpha, \delta)$ contains $R$ as a closed subalgebra and is therefore a $R$-$\widehat {\otimes}$-algebra.

Next, we strengthen the above result in the case when $R$ is moreover an Arens-Michael algebra. First, we have the following refinement of Definition \ref{local}.

\begin{definition}[{\cite[Definition 4.4]{Pir08b}}] Let $R$ be an Arens-Michael algebra. A family $\cc{T}$ of linear operators on $R$ is called $m$-\emph{localizable} if the topology on $R$ can be defined by a family of $\cc{T}$-stable submultiplicative seminorms. A single operator $T$ is called $m$-\emph{localizable} if the singleton family $\cc{T}=\{T\}$ is $m$-localizable. 
\end{definition} 
The next proposition shows that if $R$ is  an Arens-Michael algebra and operators $\alpha$ and $\delta$ form an $m$-localizable family, then the analytic Ore extension ${\mathcal {O}}(\mathbb{C}, R; \alpha, \delta)$ is itself an Arens-Michael algebra.

\begin{proposition}[{\cite[Proposition 4.5]{Pir08b}}] Let $R$ be an Arens-Michael algebra, $\alpha : R \to R$ - an algebra endomorphism, $\delta : R \to R$ - an $\alpha$-differentiation. Suppose that the set $\{\alpha, \delta \}$ is $m$-localizable. Then ${\mathcal {O}}(\mathbb{C}, R; \alpha, \delta)$ is an Arens-Michael algebra. 
\end{proposition}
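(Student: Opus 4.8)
The plan is to build the submultiplicative seminorms on the analytic Ore extension $\mathcal{O}(\mathbb{C},R;\alpha,\delta)$ directly out of an $m$-localizing family for $\{\alpha,\delta\}$ on $R$, and then verify submultiplicativity using the commutation formula \eqref{commute} together with a bound on the operators $S_{n,k}$. Since we already know from Proposition~\textnormal{[Pir08b, Prop.\ 4.3]} that $A=\mathcal{O}(\mathbb{C},R;\alpha,\delta)$ is a $\widehat{\otimes}$-algebra, and since $A\cong R\widehat{\otimes}\mathcal{O}(\mathbb{C})$ is complete, Hausdorff and locally convex, it remains only to produce a defining family of submultiplicative seminorms. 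By hypothesis the topology on $R$ is defined by a family $\{\|\cdot\|_\lambda\}_{\lambda\in\Lambda}$ of submultiplicative seminorms each of which is $\{\alpha,\delta\}$-stable; so for each $\lambda$ there is a constant $C_\lambda>0$ with $\|\alpha(r)\|_\lambda\le C_\lambda\|r\|_\lambda$ and $\|\delta(r)\|_\lambda\le C_\lambda\|r\|_\lambda$ for all $r\in R$. The candidate seminorms on $A$ are the $\|\cdot\|_{\lambda,\rho}$ with $f=\sum_n c_nz^n\mapsto\sum_n\|c_n\|_\lambda\,\rho^n$, and I claim that for $\rho$ large enough relative to $C_\lambda$ (say $\rho\ge 2C_\lambda$, or after the harmless rescaling $z\mapsto cz$) these are submultiplicative; they already define the topology of $A$ by construction, so it suffices to check the subfamily with $\rho$ large, which still generates the same topology.

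The first step is to control the growth of $S_{n,k}$. Since $S_{n,k}$ is a sum of $\binom{n}{k}$ compositions of $k$ copies of $\delta$ and $n-k$ copies of $\alpha$, and each of $\alpha,\delta$ multiplies the $\lambda$-seminorm by at most $C_\lambda$, each such composition $T$ satisfies $\|T(r)\|_\lambda\le C_\lambda^{\,n}\|r\|_\lambda$ (here one uses that $\alpha$ and $\delta$ are not merely bounded but the bound is the \emph{same} constant for the mixed compositions, since stability is a uniform statement for each operator individually and compositions just multiply the constants). Hence $\|S_{n,k}(r)\|_\lambda\le\binom{n}{k}C_\lambda^{\,n}\|r\|_\lambda$. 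The second step is the core computation: given $f=\sum_m a_mz^m$ and $g=\sum_n b_nz^n$ in $A$, expand $fg$ using \eqref{commute}, namely $z^m b_n=\sum_{k=0}^{m}S_{m,k}(b_n)z^{m-k}$, collect the coefficient of each power of $z$, apply submultiplicativity of $\|\cdot\|_\lambda$ on $R$ and the bound on $S_{m,k}$, and reorganize the resulting triple sum. One obtains an estimate of the shape $\|fg\|_{\lambda,\rho}\le\sum_{m,n}\|a_m\|_\lambda\|b_n\|_\lambda\rho^n\sum_{k=0}^{m}\binom{m}{k}C_\lambda^{\,m}\rho^{m-k}=\sum_{m,n}\|a_m\|_\lambda\|b_n\|_\lambda\rho^n C_\lambda^{\,m}(\rho+1)^m$ by the binomial theorem, which is $\le\|f\|_{\lambda,\rho'}\,\|g\|_{\lambda,\rho}$ with $\rho'=C_\lambda(\rho+1)$; rescaling $z$ (equivalently passing to the cofinal subfamily of seminorms with $\rho$ replaced by $\max(\rho,\rho')$, or normalizing so that $C_\lambda(\rho+1)\le\rho$ for the relevant range of $\rho$) turns this into genuine submultiplicativity $\|fg\|_{\lambda,\rho}\le\|f\|_{\lambda,\rho}\|g\|_{\lambda,\rho}$ for all $\rho$ above a threshold depending on $\lambda$.

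The third step is bookkeeping: verify that the subfamily $\{\|\cdot\|_{\lambda,\rho}:\lambda\in\Lambda,\ \rho\ge\rho_0(\lambda)\}$ still defines the topology of $A$ — this is immediate since for fixed $\lambda$ the seminorms $\|\cdot\|_{\lambda,\rho}$ are increasing in $\rho$, so dropping the small values of $\rho$ changes nothing — and that each is a seminorm (clear) and submultiplicative (Step~2). Together with completeness, Hausdorffness and local convexity of $A$, already established, this shows $A=\mathcal{O}(\mathbb{C},R;\alpha,\delta)$ is an Arens-Michael algebra.

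I expect the main obstacle to be the second step: getting the interchange of summation right and keeping the constants uniform so that the binomial theorem applies cleanly. The delicate point is that $\alpha$ need not be an isometry and $\delta$ is merely bounded, so the $C_\lambda^{\,n}$ factor is genuinely present and must be absorbed by enlarging $\rho$; one has to be careful that this enlargement is done uniformly over $m$ (it is, because $C_\lambda(\rho+1)$ does not depend on $m$) and that the resulting subfamily is cofinal. A secondary subtlety is justifying the term-by-term manipulations for infinite series rather than polynomials, i.e.\ that the rearrangements are legitimate because all the sums involved are absolutely convergent once $\|f\|_{\lambda,\rho'},\|g\|_{\lambda,\rho}<\infty$; this is where one quietly uses that $f,g$ lie in $A$ so their seminorms are finite for all $\rho$, in particular for $\rho'$.
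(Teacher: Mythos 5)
This proposition is quoted in the paper from \cite[Proposition 4.5]{Pir08b} without proof, so there is no in-paper argument to compare against; judging your proposal on its own terms, Steps~1 and~2 are correct up to and including the estimate
\[
\|fg\|_{\lambda,\rho}\;\le\;\|f\|_{\lambda,\rho'}\,\|g\|_{\lambda,\rho},\qquad \rho'=C_\lambda(\rho+1),
\]
(the bound $\|S_{n,k}(r)\|_\lambda\le\binom{n}{k}C_\lambda^{\,n}\|r\|_\lambda$ and the rearrangement via \eqref{commute} are fine), but the final conversion of this into genuine submultiplicativity is where the argument breaks. You need $C_\lambda(\rho+1)\le\rho$, which forces $C_\lambda<1$; in the situations this proposition is actually applied to, $C_\lambda\ge 1$ is unavoidable. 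Indeed, whenever $\alpha=\mathrm{id}$ (the Jordanian plane case) the best stability constant for $\alpha$ is exactly $1$, so $\rho'=\rho+1>\rho$ for every $\rho$, and no restriction to large $\rho$ or rescaling $z\mapsto cz$ (which merely replaces $\rho$ by $c\rho$) removes the defect. The failure is not an artifact of the estimate: for $R=\mathbb{C}[y]_\delta$ with $\delta=-y^2\frac{d}{dy}$ and $\|a\|_q=\sum_i|a_i|q^i/i!$, one computes
\[
\|x^m y\|_{q,\rho}=\sum_{k=0}^{m}\binom{m}{k}\frac{q^{k+1}}{k+1}\,\rho^{m-k}\;>\;q\rho^m=\|x^m\|_{q,\rho}\,\|y\|_{q,\rho}
\]
for every $q,\rho>0$, so the seminorms $\|\cdot\|_{\lambda,\rho}$ are simply \emph{never} submultiplicative here, and your Step~3 cannot be completed as stated.

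The estimate you proved is nevertheless the right one --- the key is that the seminorm attached to $g$ on the right-hand side is the \emph{same} $\|\cdot\|_{\lambda,\rho}$ as on the left. This means left multiplication $L_f$ acts as a bounded operator on the normed space $\bigl(A/\ker\|\cdot\|_{\lambda,\rho},\ \|\cdot\|_{\lambda,\rho}\bigr)$ with $\|L_f\|_{\mathrm{op}}\le\|f\|_{\lambda,\rho'}$, and operator seminorms are automatically submultiplicative. Since also $\|L_f\|_{\mathrm{op}}\ge\|f\cdot 1\|_{\lambda,\rho}/\|1\|_{\lambda,\rho}=\|f\|_{\lambda,\rho}/\|1_R\|_\lambda$, the family $\bigl\{f\mapsto\|L_f\|_{\mathrm{op},(\lambda,\rho)}\bigr\}$ consists of continuous submultiplicative seminorms that dominate the original ones, hence defines the same topology and exhibits $\mathcal{O}(\mathbb{C},R;\alpha,\delta)$ as an Arens--Michael algebra. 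Replacing your Step~3 (and the ``enlarge $\rho$'' normalization in Step~2) by this left-regular-representation argument repairs the proof; as written, the proposal establishes only that $A$ is a $\widehat{\otimes}$-algebra, which is the content of the preceding Proposition 4.3 of \cite{Pir08b}, not of the statement at hand.
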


Now suppose $R$ is an algebra (without a topology), $\alpha=id : R \to R$ is the identity map and $\delta : R \to R$ is a differentiation. Denote by $R_\delta$ the Arens-Michael algebra, obtained as the completion of $R$ by the system of all $\delta$-stable submiltiplicative seminorms. Let $j$ be the canonical homomorphism $j: R \to R_\delta$. Clearly $\delta$ defines a unique $m$-localizable differentiation $\widehat {\delta}$ of $R_\delta$ with the property $ \widehat {\delta} \circ j=j \circ \delta$. Therefore we get homomorphisms $$R[z; id, \delta] \to R_\delta[z; id, \widehat {\delta}] \hookrightarrow {\mathcal {O}}(\mathbb{C}, R_\delta; id, \widehat {\delta}),$$  where the first one coincides with $j$ on $R$ and maps $z$ to $z$, and the second one is a canonical inclusion. Let $\iota_{R[z; id, \delta]}$ be the composition homomorphism. The next result describes the Arens-Michael envelope of $R[z; id, \delta]$ as the analytic Ore extension $\mathcal {O}(\mathbb{C}, R_\delta; id, \widehat {\delta})$.

\begin{theorem}[{\cite[Theorem 5.1]{Pir08b}}]\label{Pirk}The pair $(\mathcal {O}(\mathbb{C}, R_\delta; id, \widehat {\delta}), \iota_{R[z; id, \delta]})$ is the Arens-Michael envelope of  the algebra $R[z; id, \delta]$.
\end{theorem}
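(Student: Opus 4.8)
The plan is to verify the universal property of Definition~\ref{AM} for the pair $(\mathcal{O}(\mathbb{C}, R_\delta; id, \widehat{\delta}), \iota_{R[z; id, \delta]})$. By Proposition~\cite[Proposition 4.5]{Pir08b}, since $\{id, \widehat{\delta}\}$ is $m$-localizable on the Arens-Michael algebra $R_\delta$, the analytic Ore extension $\mathcal{O}(\mathbb{C}, R_\delta; id, \widehat{\delta})$ is already an Arens-Michael algebra, so it is a legitimate candidate for the envelope. It therefore remains to show that every continuous homomorphism $\varphi : R[z; id, \delta] \to B$ into an Arens-Michael algebra $B$ factors uniquely and continuously through $\iota_{R[z; id, \delta]}$.

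First I would address uniqueness, which is the easy half: the image of $R[z; id, \delta]$ is dense in $\mathcal{O}(\mathbb{C}, R_\delta; id, \widehat{\delta})$ — indeed $R$ is dense in $R_\delta$ by construction, polynomials are dense in $\mathcal{O}(\mathbb{C})$, and the projective tensor product respects dense subspaces — so any continuous extension is determined by its restriction. For existence, I would split $\varphi$ into its restriction $\varphi_R := \varphi|_R : R \to B$ and the element $b := \varphi(z) \in B$, with the compatibility relation $b\,\varphi_R(r) = \varphi_R(r)\,b + \varphi_R(\delta(r))$ inherited from the Ore relation $zr = rz + \delta(r)$. The key structural point is that $\varphi_R : R \to B$ is a continuous homomorphism into an Arens-Michael algebra whose composition with $\mathrm{ad}$-type conjugation by $b$ is controlled: conjugation by $b$ is a continuous operator on $B$, and since $B$'s topology is given by submultiplicative seminorms, each such seminorm pulled back along $\varphi_R$ is a $\delta$-stable submultiplicative seminorm on $R$ (here one uses $\delta = \mathrm{ad}_b \circ \varphi_R - $ correction, i.e. that $\varphi_R \circ \delta$ equals the commutator $[b, \varphi_R(-)]$, so $\|\varphi_R(\delta(r))\| \le \|b\|\,\|\varphi_R(r)\| + \|\varphi_R(r)\|\,\|b\| = 2\|b\|\,\|\varphi_R(r)\|$). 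By the universal property defining $R_\delta$ as the completion with respect to all $\delta$-stable submultiplicative seminorms, $\varphi_R$ extends uniquely to a continuous homomorphism $\psi : R_\delta \to B$ with $\psi \circ j = \varphi_R$, and $\psi \circ \widehat{\delta} = \mathrm{ad}_b \circ \psi$ by density.

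Next I would promote $\psi$ and $b$ to a homomorphism out of $R_\delta[z; id, \widehat{\delta}]$, sending $z \mapsto b$; this is well-defined precisely because of the commutation relation just verified, and it is continuous on the polynomial algebra with the topology induced from $\mathcal{O}(\mathbb{C}, R_\delta; id, \widehat{\delta})$. Finally, I would extend it across the closure to all of $\mathcal{O}(\mathbb{C}, R_\delta; id, \widehat{\delta})$: an element there is an entire series $\sum_n c_n z^n$ with $c_n \in R_\delta$, and I must show $\sum_n \psi(c_n) b^n$ converges in $B$ and defines a continuous map. Here I would invoke $m$-localizability once more — pick a $\widehat{\delta}$-stable submultiplicative seminorm $\|\cdot\|_\lambda$ on $R_\delta$ dominating the pullback of a given seminorm on $B$, and use that conjugation by $b$ (hence powers of $b$ acting by multiplication) grows at most geometrically against the corresponding seminorm on $B$; combined with the $\rho \to \infty$ freedom in $\|f\|_{\lambda,\rho} = \sum \|c_n\|_\lambda \rho^n < \infty$, the series $\sum \psi(c_n) b^n$ is absolutely convergent and the resulting $\widehat{\varphi}$ is continuous.

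The main obstacle I anticipate is the last convergence step: making precise that the submultiplicativity of seminorms on $B$, together with the bound $\|\varphi_R(\delta(r))\| \le 2\|b\| \,\|\varphi_R(r)\|$ and its iterates controlling $b^n$, interacts correctly with the entire (rather than merely power-series) nature of $\mathcal{O}(\mathbb{C}, R_\delta; id, \widehat{\delta})$. One must be careful that it is genuinely the analytic Ore extension over $\mathbb{C}$ — with the full family $\{\|\cdot\|_{\lambda,\rho} : \rho \in \mathbb{R}_{>0}\}$ — that appears, and not some restricted-radius version; the reason the unrestricted version works is that the relevant operator here is $\widehat{\delta}$ rather than a genuine automorphism with a spectral obstruction, so no growth condition on $\rho$ is forced. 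I would isolate this convergence estimate as the technical heart of the argument and treat the functoriality/uniqueness bookkeeping as routine.
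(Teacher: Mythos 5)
This statement is quoted from \cite[Theorem 5.1]{Pir08b} and the paper supplies no proof of its own, so there is no internal argument to compare against; your reconstruction follows the standard (Pirkovskii) route and is correct. The two load-bearing points are exactly the ones you isolate: every continuous submultiplicative seminorm on $B$ pulls back along $\varphi|_R$ to a submultiplicative $\delta$-stable seminorm because $\varphi(\delta(r))=[\varphi(z),\varphi(r)]$ gives the bound $\|\varphi(\delta(r))\|\leq 2\|\varphi(z)\|\,\|\varphi(r)\|$, which is precisely why the completion $R_\delta$ with respect to \emph{all} such seminorms receives $\varphi|_R$; and the series $\sum_n\psi(c_n)\varphi(z)^n$ converges because $\|f\|_{\lambda,\rho}<\infty$ for every $\rho>0$, in particular for $\rho=\|\varphi(z)\|$, which is precisely why the radius-unrestricted entire Ore extension (rather than a disc version) is the right answer.
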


The situation when $\alpha \neq id$ is harder to handle. Let $R$ be an algebra, let $\alpha:R \to R$ be its endomorphism and let $X$ be an $R$-bimodule. We will denote by $_{\alpha}X$ the $R$-bimodule obtained by endowing the underlying abelian group of $X$ with a new $R$-multiplication rule $\bullet$: $$r \bullet x=\alpha(r)x, \quad x \bullet r=xr, \quad r\in R, \quad x\in X.$$

Let again $R$ be an algebra without a topology, $\alpha:R\to R$ be an endomorpism of $R$, and $\delta:R \to R$ be an $\alpha$-differentiation. By applying the Arens-Michael functor to $\alpha$ we obtain an endomorphism $$\widehat{\alpha}: \widehat{R} \to \widehat{R}$$of the Arens-Michael envelope of $R$ satisfying $$\widehat{\alpha} \circ \iota_R=\iota_R \circ \alpha.$$
Since we can view $\iota_R: R \to \widehat{R}$ as a morphism $_{\alpha}R \to _{\widehat{\alpha}}\widehat{R}$ of $R$-bimodules, we get that the composition $$R \xrightarrow{\delta}R  \xrightarrow{\iota_R}  _{\widehat{\alpha}}\widehat{R}$$ is a differentiation. By applying the universal property of the Arens-Michael envelopes (or rather its version for $R$-modules, see \cite[Definition 3.2]{Pir08b}), we obtain a unique $\widehat{\alpha}$-differentiation $$\widehat{\delta}: \widehat{R} \to   _{\widehat{\alpha}}\widehat{R}$$ satisfying $$\widehat{\delta} \circ \iota_R=\iota_R \circ \delta.$$

We have the following general result describing the Arens-Michael envelope of the algebraic Ore extension as an analytic Ore extension when certain technical conditions are met.
\begin{theorem}[{\cite[Theorem 5.17]{Pir08b}}]\label{main theorem} In the above setup, if the family  $\{\widehat{\alpha},\widehat{\delta}\}$ is $m$-localizable, then there exists a unique $R$-homomorphism
$$\iota_{R[z;\alpha,\delta]}: R[z; \alpha, \delta] \to {\mathcal {O}}(\mathbb{C}, \widehat{R}; \widehat{\alpha}, \widehat{\delta}) \quad \text{such that} \quad z \mapsto z.$$
The algebra ${\mathcal {O}}(\mathbb{C}, \widehat{R}; \widehat{\alpha}, \widehat{\delta})$ with the homomorphism $\iota_{R[z;\alpha,\delta]}$ is the Arens-Michael envelope of $R[z; \alpha, \delta]$.

\end{theorem}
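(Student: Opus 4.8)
The plan is to verify the universal property of the Arens-Michael envelope directly, using the already-established fact (the previous proposition) that $\mathcal{O}(\mathbb{C}, \widehat{R}; \widehat{\alpha}, \widehat{\delta})$ is an Arens-Michael algebra once the family $\{\widehat{\alpha}, \widehat{\delta}\}$ is $m$-localizable. First I would construct the homomorphism $\iota_{R[z;\alpha,\delta]}$: starting from $\iota_R : R \to \widehat{R}$ together with $z \mapsto z$, one checks that the defining relation $zr = \alpha(r)z + \delta(r)$ of the algebraic Ore extension is respected after applying $\iota_R$, because by construction $\widehat{\alpha} \circ \iota_R = \iota_R \circ \alpha$ and $\widehat{\delta} \circ \iota_R = \iota_R \circ \delta$, and because $z$ satisfies the analogous relation $zs = \widehat{\alpha}(s)z + \widehat{\delta}(s)z^{0}$ inside $\mathcal{O}(\mathbb{C}, \widehat{R}; \widehat{\alpha}, \widehat{\delta})$ (this is the $n=1$ case of formula \eqref{commute}, valid on the dense polynomial subalgebra and hence everywhere). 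Since $R[z;\alpha,\delta]$ is free as a left $R$-module on the powers of $z$, this data extends uniquely to an $R$-algebra homomorphism, giving existence and uniqueness of $\iota_{R[z;\alpha,\delta]}$.

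Next I would check the universal property. Let $B$ be an Arens-Michael algebra and $\varphi : R[z;\alpha,\delta] \to B$ a continuous homomorphism (here ``continuous'' means continuous for the strongest locally convex topology on $R[z;\alpha,\delta]$, i.e.\ merely a homomorphism of algebras). Restricting $\varphi$ to the subalgebra $R$ and using the universal property of $\widehat{R}$, we get a unique continuous homomorphism $\psi : \widehat{R} \to B$ with $\psi \circ \iota_R = \varphi|_R$. Set $b = \varphi(z) \in B$. The key point is that $\psi$ and the element $b$ together determine a continuous homomorphism $\widehat{\varphi} : \mathcal{O}(\mathbb{C}, \widehat{R}; \widehat{\alpha}, \widehat{\delta}) \to B$ with $\widehat{\varphi}|_{\widehat{R}} = \psi$ and $\widehat{\varphi}(z) = b$: on the dense polynomial subalgebra $\widehat{R}[z; \widehat{\alpha}, \widehat{\delta}]$ one defines $\widehat{\varphi}(\sum s_i z^i) = \sum \psi(s_i) b^i$, checks it is multiplicative using the commutation relation in $B$ (which holds because $\varphi$ was a homomorphism on the algebraic Ore extension, so $b \cdot \varphi(r) = \varphi(\alpha(r)) b + \varphi(\delta(r))$, and then extends by $\widehat{R}$-linearity and continuity of $\psi$), and then extends to the completion by a boundedness/continuity estimate. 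Uniqueness of $\widehat{\varphi}$ follows because $\widehat{R}$ and $z$ generate a dense subalgebra.

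The main obstacle is the continuity/extension step in constructing $\widehat{\varphi}$ on the completion $\mathcal{O}(\mathbb{C}, \widehat{R}; \widehat{\alpha}, \widehat{\delta})$: one must show that the map $\sum s_i z^i \mapsto \sum \psi(s_i) b^i$, a priori only defined on polynomials, is continuous for the seminorms $\|\cdot\|_{\lambda, \rho}$ defining the topology of the analytic Ore extension, so that it extends to convergent power series. This requires a submultiplicative estimate: since $B$ is Arens-Michael, its topology is given by submultiplicative seminorms $\|\cdot\|_\mu$, and for each such $\mu$ one needs $\|\psi(s)\|_\mu \le C\|s\|_\lambda$ for a suitable $\lambda$ (continuity of $\psi$) together with control of $\|b^i\|_\mu \le \|b\|_\mu^i$, whence $\|\sum \psi(s_i) b^i\|_\mu \le \sum C \|s_i\|_\lambda \|b\|_\mu^i = C \|\sum s_i z^i\|_{\lambda, \|b\|_\mu}$, which is finite by definition of $\mathcal{O}(\mathbb{C}, \widehat{R}; \widehat{\alpha}, \widehat{\delta})$. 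The delicate part hidden here is that the seminorms on $B$ need to be chosen compatibly with the $\widehat{\alpha}$- and $\widehat{\delta}$-stability built into $\widehat{R}$, which is exactly where the $m$-localizability hypothesis on $\{\widehat{\alpha}, \widehat{\delta}\}$ is used; once that compatibility is in place, the estimate above closes the argument and both existence and uniqueness of $\widehat{\varphi}$ follow, completing the verification that $(\mathcal{O}(\mathbb{C}, \widehat{R}; \widehat{\alpha}, \widehat{\delta}), \iota_{R[z;\alpha,\delta]})$ satisfies Definition \ref{AM}.
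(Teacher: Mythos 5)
This theorem is quoted verbatim from \cite[Theorem 5.17]{Pir08b} and the paper supplies no proof of its own, so there is nothing internal to compare against; your outline is essentially the argument of the cited source: combine the universal property of $\widehat{R}$ (giving $\psi:\widehat{R}\to B$ from $\varphi|_R$) with a universal property of the analytic Ore extension, defining $\widehat{\varphi}$ on the dense polynomial subalgebra by $\sum_i s_i z^i\mapsto\sum_i\psi(s_i)b^i$, checking multiplicativity via the commutation relation $b\psi(s)=\psi(\widehat{\alpha}(s))b+\psi(\widehat{\delta}(s))$ (extended from $\iota_R(R)$ to $\widehat{R}$ by density and joint continuity), and closing with the estimate $\bigl\|\sum_i\psi(s_i)b^i\bigr\|_\mu\le C\,\bigl\|\sum_i s_i z^i\bigr\|_{\lambda,\rho}$ for $\rho\ge\|b\|_\mu$. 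The only imprecision is your final remark on where $m$-localizability enters: it is not needed to make that estimate close (continuity of $\psi$ and submultiplicativity of $\|\cdot\|_\mu$ on $B$ suffice), but rather to guarantee that ${\mathcal O}(\mathbb{C},\widehat{R};\widehat{\alpha},\widehat{\delta})$ exists as a $\widehat{\otimes}$-algebra and is itself an Arens--Michael algebra, which you do correctly invoke at the outset.
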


\section{The Arens-Michael envelope of the Jordanian plane}\label{JP}

Now we turn to the main results of this paper. We first compute the Arens-Michael envelope of the Jordanian plane.

\begin{definition}\label{Jordanian plane} \emph{The Jordanian plane} over $\mathbb{K}$ is the $\mathbb{K}$-algebra $\Lambda_2 (\mathbb{K})$ given by generators $x$ and $y$ and a commuting relation $yx=xy+y^2$, i.e.  $$\Lambda_2 (\mathbb{K})=\mathbb{K} \langle x,y \rangle / (yx-xy-y^2) .$$

\end{definition}

We would be interested in the case when $\mathbb{K}=\mathbb{C}$ and we would denote $\Lambda_2 (\mathbb{C}) =:\Lambda_2 $.

Following a simple induction argument it is easy to check that the monomials $\{x^i y^j \ | \ i,j \in \mathbb{Z}_+\}$  span  $\Lambda_2 $. It is shown in \cite{Shi05} that they are also linearly independent and, therefore, form the basis of $\Lambda_2 $. As a result, we can again view $\Lambda_2 $ as a polynomial algebra with a ``twisted'' multiplication.

Comparing Definition \ref{Jordanian plane} to Definition \ref{twisted mult} we see that the  Jordanian plane is the Ore extension $\mathbb{C}[y][x; id, -y^2 \frac {d} {dy}]$. Therefore, in order to apply Theorem \ref{Pirk} we need  to describe  the system of all submiltiplicative $\delta$-stable seminorms on $\mathbb{C}[y]$, where $\delta= -y^2 \frac {d} {dy}$. 

Write an element $a \in \mathbb{C}[y]$ as a polynomial $\sum \limits_{i=0}^{n}  a_i y^i$. We first have the following result.

\begin{proposition} The family $$\{\| \cdot \|_{\rho}  \quad : \quad  \|a\|_\rho = \sum \limits_{i=0}^{n} | a_i| \frac{1}{(i-1)!} \rho^i < \infty,   \quad \rho \in \bb{R}_{>0} \}, \quad (-1! :=1, 0! :=1)$$ is equivalent to the family of all submultiplicative  $\delta$-stable seminorms on $\mathbb{C}[y]$ with $\delta=-y^2 \frac {d} {dy}$.
\end{proposition}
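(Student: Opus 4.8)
The plan is to prove equivalence of the two families of seminorms by showing two inclusions: every seminorm in the family $\{\|\cdot\|_\rho\}$ is submultiplicative and $\delta$-stable, and conversely every submultiplicative $\delta$-stable seminorm on $\mathbb{C}[y]$ is dominated by some $\|\cdot\|_\rho$. The weight $w_i = \frac{1}{(i-1)!}$ is the key object: notice $\delta(y^i) = -i y^{i+1}$, so $\|\delta(y^i)\|_\rho / \|y^i\|_\rho = i \cdot \frac{w_{i+1}}{w_i}\rho = i \cdot \frac{(i-1)!}{i!}\rho = \rho$, which is bounded uniformly in $i$ — this is exactly why the weight is chosen this way and is the computation that makes $\delta$-stability work. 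The convention $(-1)! = 0! = 1$ handles the low-degree terms $i=0,1$ separately (for these $\delta(y^0)=0$ and $\delta(y^1)=-y^2$, which one checks directly).

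First I would verify that each $\|\cdot\|_\rho$ is a genuine submultiplicative seminorm: one needs $w_{i+j} \le w_i w_j$, i.e. $\frac{1}{(i+j-1)!} \le \frac{1}{(i-1)!(j-1)!}$, equivalently $(i-1)!(j-1)! \le (i+j-1)!$, which follows from elementary binomial estimates (with the boundary cases $i$ or $j \in \{0,1\}$ checked by hand using the stated conventions). Submultiplicativity on monomials then extends to all polynomials by the triangle inequality in the standard way. Next I would verify $\delta$-stability: from the monomial computation above, $\|\delta(a)\|_\rho \le \rho \|a\|_\rho$ for every $a$, so $C = \rho$ works. This establishes that every $\|\cdot\|_\rho$ belongs to the family of submultiplicative $\delta$-stable seminorms, so the topology generated by $\{\|\cdot\|_\rho\}$ is coarser than (or equal to) the Arens-Michael topology on $\mathbb{C}[y]$ relevant to $\delta$.

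For the reverse inclusion — the main obstacle — I would take an arbitrary submultiplicative $\delta$-stable seminorm $p$ on $\mathbb{C}[y]$ and show $p \le C\|\cdot\|_\rho$ for suitable $C, \rho$. Set $r = p(y)$. Submultiplicativity gives $p(y^i) \le r^i$, which handles the case $r>0$ only up to the factorial weight; the point of $\delta$-stability is to \emph{improve} this. Since $p$ is $\delta$-stable there is $C_0>0$ with $p(\delta(a)) \le C_0 p(a)$ for all $a$; applying this to $a = y^i$ gives $p(i\,y^{i+1}) \le C_0 p(y^i)$, i.e. $p(y^{i+1}) \le \frac{C_0}{i} p(y^i)$ for $i \ge 1$. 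Iterating from $y$ downward: $p(y^{i}) \le \frac{C_0^{i-1}}{(i-1)!} p(y)$ for $i \ge 1$, and $p(1) \le 1$ (submultiplicativity of a seminorm on a unital algebra, after noting $p(1)^2 \le p(1)$ forces $p(1)\in\{0,1\}$ — and $p(1)=0$ would force $p \equiv 0$, a degenerate case handled trivially). Writing $C_0^{i-1}/(i-1)! = C_0^{i}\cdot\frac{1}{C_0}\cdot\frac{1}{(i-1)!}$, we get $p(y^i) \le C\, w_i\, \rho^i$ with $\rho = C_0$ and $C = \max\{1, p(y)/C_0\}$ (checking the $i=0$ term against the convention $w_0 = 1$). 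Hence for a general polynomial $a = \sum a_i y^i$, the triangle inequality yields $p(a) \le \sum |a_i| p(y^i) \le C \sum |a_i| w_i \rho^i = C\|a\|_\rho$. This shows the Arens-Michael topology is coarser than the one from $\{\|\cdot\|_\rho\}$, and combined with the first half the two families are equivalent. The delicate points to be careful about are the degenerate case $p(1)=0$, the low-degree terms where the factorial convention intervenes, and making sure the iteration of the $\delta$-stability inequality is anchored correctly at $i=1$ rather than $i=0$ (since $\delta(y^0)=0$ gives no information).
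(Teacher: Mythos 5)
Your proposal is correct and follows essentially the same route as the paper: verify submultiplicativity and $\delta$-stability of $\|\cdot\|_\rho$ on monomials, then use iterated $\delta$-stability on powers of $y$ to extract the factorial decay $p(y^{i})\le \frac{C_0^{\,i-1}}{(i-1)!}\,p(y)$ and dominate $p$ by a suitable $\|\cdot\|_\rho$ (the paper packages the same iteration as the identity $\delta^{j}(y)=(-1)^{j}j!\,y^{j+1}$). The only slip is the claim that $p(1)\in\{0,1\}$: submultiplicativity gives $p(1)\le p(1)^2$, hence $p(1)=0$ or $p(1)\ge 1$, but this is harmless since $p(1)$ can simply be absorbed into the final constant $C$.
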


\begin{proof} We split the proof of the proposition into steps for the reader's convenience. 

\noindent{\bf Step 1.} First note that $\| \cdot \|_\rho $ is indeed a seminorm. Moreover, it is submultiplicative:

$$\| y^k y^l\|_\rho=\|y^{k+l}\|_\rho=\frac{\rho^{k+l}}{(k+l-1)!} \leq \frac{\rho^k}{(k-1)!} \frac{\rho^l}{(l-1)!}=\|y^k\|_\rho\|y^l\|_\rho, \quad \forall k,l \geq 0.$$ 
Now for $a,b \in \mathbb{C}[y]$ we have: 
$$
\begin{aligned}
\| ab \|_{\rho} &= \| \sum_{i=0}^{n}  a_i y^i \cdot \sum_{j=0}^{m}  b_j y^j\|_\rho= \| \sum \limits_{i,j=0}^{n,m}  a_i b_j y^i y^j \|_\rho  \\
 &\leq \sum \limits_{i,j=0}^{n,m}  |a_i| |b_j|\| y^i y^j\|_\rho \leq \sum \limits_{i,j=0}^{n,m}  |a_i| |b_j|\| y^i\|_\rho \| y^j\|_\rho \\
 &= \sum \limits_{i=0}^{n}  |a_i|\| y^i\|_\rho \cdot \sum \limits_{j=0}^{m}  |b_j| \|y^j\|_\rho =\| \sum \limits_{i=0}^{n}  a_i y^i\|_\rho \cdot \|\sum \limits_{j=0}^{m}  b_j y^j\|_\rho=\|a\|_\rho \|b\|_\rho.
\end{aligned}
$$

\noindent{\bf Step 2.} Next, we show that $\| \cdot \|_\rho $ is $\delta$-stable. Note that $$\delta (y^i)=-y^2 \frac {d} {dy} (y^i)=-iy^{i+1}, \quad \forall i \geq 1$$ and $$\delta (y^0)=\delta (1)=-y^2 \frac {d} {dy} (1)=0.$$  We have 
$$
\begin{aligned} \|\delta(a)\|_\rho&=\|\delta(\sum \limits_{i=0}^{n}  a_i y^i)\|_\rho=\|\sum \limits_{i=0}^{n}  a_i \delta(y^i)\|_\rho=\|\sum \limits_{i=1}^{n}  a_i (-iy^{i+1})\|_\rho \\
  &=\|\sum \limits_{i=1}^{n}  a_i iy^{i+1}\|_\rho=\|\sum \limits_{j=2}^{n+1}  a_{j-1} (j-1)y^{j}\|_\rho=\sum \limits_{j=2}^{n+1}  |a_{j-1}| \frac{j-1}{(j-1)!} \rho^{j} \\
 &=\rho \sum \limits_{j=2}^{n+1}  |a_{j-1}| \frac{1}{(j-2)!} \rho^{j-1} \leq \rho \sum \limits_{i=1}^{n}  |a_{i}| \frac{1}{(i-1)!} \rho^{i} \leq \rho \sum \limits_{i=0}^{n}  |a_{i}| \frac{1}{(i-1)!} \rho^{i}= \rho \|a\|_\rho.
 \end{aligned}
$$

\noindent{\bf Step 3.} Finally, we show that any submultiplicative $\delta$-stable seminorm $\| \cdot \|$ is dominated by $\| \cdot \|_\rho$ for some $\rho > 0$. 

Note that by induction we get $$\delta^j (y)=(-1)^j \ j! \ y^{j+1}, \quad j \geq 1.$$      
From $\delta$-stability we get $$\|\delta^j (a)\| \leq C \|\delta^{j-1} (a)\| \leq ... \leq C^j \|a\|.$$
Now setting $a =y$ we have 
$$\|\delta^j(y)\|=j!\|y^{j+1}\| \leq C^j \|y\|,$$ so that $$\|y^{j+1}\| \leq \frac{C^j \|y\|}{j!}, \quad j \geq 1.$$
Note that if we pick $C \geq 1$ and set $\rho :=C \max\{\|y\|, 1\} $, we have 
$$C^j \|y\| \leq C^j \max\{\|y\|, 1\} \leq C^j (\max\{\|y\|, 1\})^j  = \rho^j,$$ and therefore 
$$\|y^{j+1}\| \leq \frac{C^j \|y\|}{j!} \leq \frac{\rho^j}{j!} \leq \frac{\rho^{j+1}}{j!} = \|y^{j+1}\|_\rho, \quad j \geq 1.$$

For $j=0$ we have $$\|y\|=\frac{\|y\|}{\rho}\rho\leq D\|y\|_\rho,$$ where $D=\max\{\frac{\|y\|}{\rho}, 1 \} $. 

Finally, 
$$\|a\|=\|\sum \limits_{i=0}^{n}  a_i y^i\| \leq \sum \limits_{i=0}^{n}  |a_i| \| y^i\| \leq |a_0|+|a_1|D\|y\|_\rho + \sum \limits_{i=2}^{n}  |a_i| \| y^i\|_\rho \leq D \sum \limits_{i=0}^{n}  |a_i| \| y^i\|_\rho=D\|a\|_\rho.$$ \end{proof}
Next, we pass to a simpler family of seminorms.
\begin{lemma}\label{equiv} The family of seminorms on $\bb{C}[y]$ $$P= \{ \| \cdot \|_\rho  \quad : \quad   \|a\|_\rho = \sum \limits_{i=0}^{n} | a_i| \frac{1}{(i-1)!} \rho^i < \infty, \quad \rho \in \bb{R}_{>0},\}, \quad (-1! :=1, 0! :=1)$$ is equivalent to the family $$Q= \{ \| \cdot \|_q \quad :  \quad \|a\|_q = \sum \limits_{i=0}^{n} | a_i| \frac{1}{i!} q^i < \infty, \quad q \in \bb{R}_{>0}  \}, \quad (0! :=1),$$where $a=\sum \limits_{i=0}^{n}  a_i y^i \in \bb{C}[y].$ 
\end{lemma}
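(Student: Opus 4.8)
To show the two families $P$ and $Q$ are equivalent, I need to show that every seminorm in $P$ is dominated (up to a constant) by some seminorm in $Q$, and vice versa. Since both families are indexed by a single positive real parameter and each individual seminorm is an $\ell^1$-type weighted norm on the monomial basis $\{y^i\}$, the comparison reduces to a pointwise comparison of the weight sequences $\frac{1}{(i-1)!}\rho^i$ and $\frac{1}{i!}q^i$ for suitable choices of the other parameter.

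**Step 1: $P$ is dominated by $Q$.** Fix $\rho>0$. I want to find $q>0$ and a constant $C>0$ with $\|a\|_\rho \le C\|a\|_q$ for all $a$, which by the $\ell^1$ structure amounts to finding $q,C$ such that $\frac{1}{(i-1)!}\rho^i \le C\,\frac{1}{i!}q^i$ for every $i\ge 0$. Rewriting, this is $\frac{i!}{(i-1)!}\left(\frac{\rho}{q}\right)^i \le C$, i.e. $i\left(\frac{\rho}{q}\right)^i \le C$ (with the $i=0$ and $i=1$ cases handled by the convention constants, giving a harmless bounded factor). Choosing $q = 2\rho$ (or any $q>\rho$) makes $i(\rho/q)^i = i\,2^{-i}$, which is bounded over $i\in\mathbb{Z}_{\ge 0}$; take $C$ to be that bound times the constant absorbing the low-index conventions. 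Hence $\|\cdot\|_\rho \le C\|\cdot\|_{2\rho}$.

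**Step 2: $Q$ is dominated by $P$.** Fix $q>0$. Symmetrically, I need $\rho>0$ and $C'>0$ with $\frac{1}{i!}q^i \le C'\,\frac{1}{(i-1)!}\rho^i$, i.e. $\frac{1}{i}\left(\frac{q}{\rho}\right)^i \le C'$ for $i\ge 1$ (and the finitely many small-$i$ cases again absorbed into the constant). This is even easier: for $i\ge 1$ the factor $\frac{1}{i}$ is already $\le 1$, so choosing $\rho = q$ already gives $\frac{1}{i}(q/\rho)^i = \frac{1}{i}\le 1$; one only needs to enlarge $C'$ to account for the differing normalization at $i=0,1$ coming from the conventions $-1!:=1$, $0!:=1$. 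Thus $\|\cdot\|_q \le C'\|\cdot\|_q$ (in the $P$-family) for a suitable $C'$.

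**Main obstacle.** There is no real obstacle here — this is a routine weight-sequence comparison. The only point requiring care is bookkeeping the boundary conventions $-1! := 1$ and $0! := 1$: the ratio of weights is not literally $i(\rho/q)^i$ for $i=0,1$, so one should either treat those two indices separately or simply note that altering finitely many weights changes a seminorm only up to a multiplicative constant, which is absorbed into $C$ and $C'$. Once that is observed, both inclusions follow from the boundedness of $i\,t^i$ (resp. the triviality of $\tfrac{1}{i}t^i$ for $t\le 1$) over $i\in\mathbb{Z}_{\ge 0}$, and the equivalence $P\sim Q$ is established.
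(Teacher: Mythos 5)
Your proof is correct and follows essentially the same route as the paper: a pointwise comparison of the weight sequences, using $q=2\rho$ (equivalently $i\le 2^i$) for $P\prec Q$ and $\rho=q$ (equivalently $\tfrac{1}{i}\le 1$) for $Q\prec P$, with the conventions at $i=0,1$ absorbed into constants. No issues.
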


\begin{proof}
First, we observe $$\sum \limits_{i=0}^{n} | a_i| \frac{1}{i!} q^i \leq \sum \limits_{i=0}^{n} | a_i| \frac{1}{(i-1)!} q^i,$$ and, therefore, $Q \prec P$. 

Since for $i \geq 0$ $$ i \leq 2^i \Leftrightarrow \frac{1}{(i-1)!} \leq \frac {2^i}{i!},$$ we have $$\sum \limits_{i=0}^{n} | a_i| \frac{1}{(i-1)!} \rho^i \leq \sum \limits_{i=0}^{n} | a_i| \frac{2^i}{i!} \rho^i = \sum \limits_{i=0}^{n} | a_i| \frac{1}{i!} q^i, \quad q=2\rho,$$ and $P \prec Q$. 
\end{proof}

Finally, we can apply Theorem \ref{Pirk} to get the description of the Arens-Michael envelope of the Jordanian plane $\Lambda_2$.

\begin{theorem} The Arens-Michael envelope of the Jordanian plane $\Lambda_2$ is $$\widehat{\Lambda}_2:=\{a=\sum \limits_{i,j=0}^{\infty} a_{ij} x^i y^j \quad : \quad  \|a\|_\rho< \infty   \quad \text{for any} \quad \rho>0\},$$
where $$\|a\|_\rho=  \sum \limits_{i,j=0}^{\infty} | a_{ij}| \frac{1}{j!} \rho^{i+j}.$$

The topology on  $\widehat{\Lambda}_2$ is generated by the system $\{\| \cdot\|_\rho \ :  \ \rho \in \bb{R}_{>0}\},$ and multiplication is characterized by the relation $yx=xy+y^2$.
\end{theorem}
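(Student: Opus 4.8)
The plan is to apply Theorem~\ref{Pirk} to the Ore presentation $\Lambda_2=\mathbb{C}[y][x;\mathrm{id},\delta]$ with $\delta=-y^2\tfrac{d}{dy}$ recorded above. The Proposition together with Lemma~\ref{equiv} identifies $R_\delta$, the completion of $\mathbb{C}[y]$ against all submultiplicative $\delta$-stable seminorms, with the space of power series $\sum_{j\ge0}b_jy^j$ such that $\|b\|_q:=\sum_{j\ge0}|b_j|\tfrac{q^j}{j!}<\infty$ for every $q>0$, topologized by $\{\|\cdot\|_q:q>0\}$; the differentiation $\widehat\delta$ is the continuous extension of $-y^2\tfrac{d}{dy}$ to $R_\delta$. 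Theorem~\ref{Pirk} then yields a topological algebra isomorphism $\widehat{\Lambda}_2\cong\mathcal{O}(\mathbb{C},R_\delta;\mathrm{id},\widehat\delta)$ under which $x$ corresponds to the variable $z$.

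Next I would unwind the construction of the analytic Ore extension from \S\ref{Ore}. As a locally convex space $\mathcal{O}(\mathbb{C},R_\delta;\mathrm{id},\widehat\delta)=R_\delta\,\widehat\otimes\,\mathcal{O}(\mathbb{C})$ consists of series $\sum_mc_mz^m$ with $c_m\in R_\delta$ and $\sum_m\|c_m\|_q\,\rho^m<\infty$ for all $q,\rho>0$; writing $c_m=\sum_la_{lm}y^l$ and $z=x$, this is the space of series $\sum_{l,m}a_{lm}y^lx^m$ with $\sum_{l,m}|a_{lm}|\tfrac{q^l}{l!}\rho^m<\infty$ for all $q,\rho>0$, topologized by the corresponding seminorms $\|\cdot\|_{q,\rho}$. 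Since $q^l\rho^m\le\max(q,\rho)^{l+m}$, the two-parameter family $\{\|\cdot\|_{q,\rho}\}$ is equivalent to the one-parameter family $\|\cdot\|_\rho:=\|\cdot\|_{\rho,\rho}$, so $\widehat{\Lambda}_2$ is described exactly as in the statement, except with the monomials $y^lx^m$ in place of $x^iy^j$. It remains to show that rewriting elements in the $x^iy^j$-ordering produces an equivalent family of seminorms.

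This last point is where the real work lies. Since $\alpha=\mathrm{id}$ forces $S_{n,k}=\binom{n}{k}\delta^k$, formula~\eqref{commute} reads $x^nr=\sum_{k=0}^n\binom{n}{k}\delta^k(r)x^{n-k}$ for $r\in\mathbb{C}[y]$, and a symmetric induction gives $r\,x^n=\sum_{k=0}^n(-1)^k\binom{n}{k}x^{n-k}\delta^k(r)$. Combined with
\[
\delta^k(y^j)=(-1)^k\tfrac{(j+k-1)!}{(j-1)!}\,y^{j+k}\ \ (j\ge1),\qquad \delta^k(1)=0\ \ (k\ge1),
\]
these furnish the explicit triangular changes of basis between $\{x^iy^j\}$ and $\{y^lx^m\}$, each a finite sum with coefficients $\pm\binom{\cdot}{\cdot}\tfrac{(\cdot)!}{(\cdot)!}$. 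Substituting a series into these formulas and estimating with $\binom{n}{k}\le2^n$ and the elementary bound $\tfrac{(j-1)!}{(j-k-1)!\,j!}\le\tfrac1{(j-k)!}$, one checks that reordering sends a series whose norm in one ordering is finite to a series whose norm in the other ordering is at most its $\|\cdot\|_{2\rho}$-norm, and symmetrically back; hence the two seminorm families are equivalent and the two descriptions of $\widehat{\Lambda}_2$ coincide. Finally, because $x$ maps to $z$ and $y$ lies in $R_\delta$, the multiplication inherited from $\mathcal{O}(\mathbb{C},R_\delta;\mathrm{id},\widehat\delta)$ is governed by $zr=rz+\delta(r)$, i.e.\ by $yx=xy+y^2$, as claimed.

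I expect the change-of-basis estimate to be the main obstacle. The matrix relating the two monomial orderings is triangular but carries the rapidly growing factorial weights $\tfrac{(j+k-1)!}{(j-1)!}$ coming from $\delta^k(y^j)$, and one must verify that these are tamed by the $\tfrac1{j!}$ in the weighted $\ell^1$-norms so that the reordering is a topological isomorphism; this is a concrete but somewhat delicate computation, and it is the only place where the precise weight $\tfrac1{j!}$ (rather than, say, $\tfrac1{(j-1)!}$ as in the Proposition) matters.
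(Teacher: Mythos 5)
Your proposal is correct and follows essentially the same route as the paper: apply Theorem \ref{Pirk} to the presentation $\mathbb{C}[y][x;\mathrm{id},-y^2\tfrac{d}{dy}]$, identify $R_\delta$ via the Proposition together with Lemma \ref{equiv}, and then unwind the analytic Ore extension $\mathcal{O}(\mathbb{C},R_\delta;\mathrm{id},\widehat\delta)$. The only divergence is that the two points you isolate as ``the real work''---collapsing the two-parameter family $\|\cdot\|_{q,\rho}$ to one parameter and, more substantively, checking via the $\delta^k(y^j)$ and binomial estimates that passing between the orderings $y^lx^m$ and $x^iy^j$ is a topological isomorphism---are exactly what the paper compresses into ``it is easy to check that this description is equivalent,'' and your estimates correctly supply that verification.
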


\begin{proof} By Theorem \ref{Pirk}, the Arens-Michael envelope of $\Lambda_2=\mathbb{C}[y][x; id, -y^2 \frac {d} {dy}]$ is given by the analytic Ore extension $\mathcal {O}(\mathbb{C}, \bb{C}[y]_\delta; id, \widehat {\delta})$, where $\bb{C}[y]_{\delta}$ is the completion of $\bb{C}[y]$ with respect to the family $Q$ of seminorms by Lemma \ref{equiv}. 

By the discussion preceding Theorem \ref{Pirk}, the space $\mathcal {O}(\mathbb{C}, \bb{C}[y]_\delta; id, \widehat {\delta})$ can be described as the set$$\{ a=\sum_{i=0}^{\infty} \sum_{j=0}^{\infty} a_{ij} x^i y^j \quad : \quad \| a \|_{q,\rho}< \infty \quad \text{for any} \quad q, \rho>0\},$$where $$\quad \|a \|_{q, \rho}=\sum_{i=0}^{\infty}\|\sum_{j=0}^{\infty} a_{ij}  y^j\|_{q} \ \rho^i=\sum_{i=0}^{\infty}\sum_{j=0}^{\infty} |a_{ij}|\frac{1}{j!}  q^j\ \rho^i,$$ with topology given by 
the family of seminorms $\{ \| \cdot \|_{q,\rho} \ : \ q,\rho\in \bb{R}_{>0} \}.$ It is easy to check that this description is equivalent to the description in the statement of the theorem. 
\end{proof}
We see that again the Arens-Michael envelope of a polynomial algebra with a ``twisted'' multiplication is a power series algebra with the same multiplication rule.

\section{The Arens-Michael envelope of $U_q(\mathfrak{sl}(2)), \quad |q|=1$.}\label{EA}

In the section, we turn to the second main result of this paper. 

The quantum enveloping algebra $U_q(\mathfrak{sl}(2))$ of the Lie algebra $\mathfrak{sl}(2)$ plays one of the most important roles in  noncommutative geometry. It can be defined in two slightly different ways. 

\begin{definition}\label{first def} For $q \in \bb{C} \setminus \{1,-1 \}$, consider an algebra $U_q(\mathfrak{sl}(2))$ on four generators $E, F, K, K^{-1}$ subject to the following relations:

\begin{enumerate}

\item $K K^{-1}=K^{-1} K=1$,

\item $KEK^{-1}=q^2 E, \ KFK^{-1}=q^{-2}F$,

\item $[E,F]= \frac {K-K^{-1}}{q-q^{-1}}$.
\end{enumerate}

\end{definition}

\begin{definition}\label{second def} For $q \in \bb{C} \setminus \{1,-1 \}$, consider an algebra $U'_q(\mathfrak{sl}(2))$ on five generators $E,F,K,K^{-1},L$ with relations:

\begin{enumerate}

\item $K K^{-1}=K^{-1} K=1$,

\item $KEK^{-1}=q^2 E, \ KFK^{-1}=q^{-2}F$,

\item $[E,F]= L, \ (q-q^{-1})L=K-K^{-1}$,

\item $[L,E]=q(EK+K^{-1}E) , \ [L,F]=-q^{-1}(FK+K^{-1}F)$.
\end{enumerate}

\end{definition}

Clearly, $U'_q(\mathfrak{sl}(2))$ is isomorphic to $U_q(\mathfrak{sl}(2))$ via a map that sends $L \in U'_q(\mathfrak{sl}(2))$ to $[E,F] \in U_q(\mathfrak{sl}(2))$ and leaves other generators intact.
Note that the second definition allows us to consider the limiting case $q=1$ where our quantum enveloping algebra almost becomes the usual enveloping algebra $U(\mathfrak{sl}(2))$. In fact, we have the following isomorphisms (see \cite{Kas94}):
\begin{equation}\label{envel}
U'_1(\mathfrak{sl}(2)) \cong \frac{U(\mathfrak{sl}(2)) [K]}{(K^2-1)} \cong U(\mathfrak{sl}(2)) \otimes \frac {\mathbb{C} [K]}{(K^2-1)}.\end{equation}  For more information on $U_q(\mathfrak{sl}(2))$, see \cite{Kas94}.

We obtain the following description of the Arens-Michael envelope of $U_q(\mathfrak{sl}(2))$ for $|q|=1$.

\begin{theorem} Let $U_q(\mathfrak{sl}(2))$ be the quantum enveloping algebra of $\mathfrak{sl}(2)$, and let $\widehat{U_q(\mathfrak{sl}(2))}$ be its Arens-Michael envelope.

\begin{enumerate}
\item If $q=1$, then $$\widehat{U_1(\mathfrak{sl}(2)} \cong \widehat{U(\mathfrak{sl}(2)} \widehat{\otimes} \frac {\mathbb{C} [K]}{(K^2-1)}.$$

\item
$ |q|=1, q \neq 1,-1$, then 
$$
\widehat{U_q(\mathfrak{sl}(2))} =
\{ c = \sum \limits_{i \in \mathbb{Z}, n,m \geq 0} c_{i,n,m} K^i F^n E^m \quad : \quad \|c\|_\rho < \infty \quad \text{for any} \quad \rho > 0 \},
$$
where 
$$\|c\|_\rho=  \sum \limits_{i \in \mathbb{Z}, n,m \geq 0} |c_{i,n,m}| \rho^{i+n+m}$$
The topology on  $\widehat{U_q(\mathfrak{sl}(2))}$  is generated by the system $\{\| \cdot\|_\rho \ :  \ \rho \in \bb{R}_{>0}\}.$
\end{enumerate}
\end{theorem}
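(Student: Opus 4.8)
The plan is to handle the two cases by different machinery: the degenerate case $q=1$ via the tensor-product formalism of \S\ref{tensor}, and the case $|q|=1$, $q\neq\pm1$ via iterating the Ore-extension theorem (Theorem \ref{main theorem}). For $q=1$ one reads $U_q(\mathfrak{sl}(2))$ through the presentation $U'_1(\mathfrak{sl}(2))$ of Definition \ref{second def}, so that \eqref{envel} gives an algebra isomorphism $U'_1(\mathfrak{sl}(2))\cong U(\mathfrak{sl}(2))\otimes\mathbb{C}[K]/(K^2-1)$. Both factors are of at most countable dimension with the strongest locally convex topology, so Proposition \ref{tensor 1} turns the algebraic tensor product into the projective one $U(\mathfrak{sl}(2))\widehat{\otimes}\mathbb{C}[K]/(K^2-1)$, and Proposition \ref{tensor 2} then yields $\widehat{U'_1(\mathfrak{sl}(2))}\cong\widehat{U(\mathfrak{sl}(2))}\widehat{\otimes}(\mathbb{C}[K]/(K^2-1))^{\widehat{\ }}$. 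Since $\mathbb{C}[K]/(K^2-1)$ is finite-dimensional it is already an Arens-Michael algebra (embed it in $M_2(\mathbb{C})$ by left multiplication), hence its own envelope; the factor $\widehat{U(\mathfrak{sl}(2))}$ is the algebra of Example \ref{lie matrix}. This gives part (1).

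For $|q|=1$, $q\neq\pm1$, I would first exhibit $U_q(\mathfrak{sl}(2))$ as the iterated Ore extension $R_1[E;\alpha_2,\delta_2]$ with $R_1=\mathbb{C}[K,K^{-1}][F;\alpha_1,0]$, where $\alpha_1(K^{\pm1})=q^{\pm2}K^{\pm1}$, $\alpha_2(K^{\pm1})=q^{\mp2}K^{\pm1}$, $\alpha_2(F)=F$, and $\delta_2$ is the $\alpha_2$-differentiation with $\delta_2(K^{\pm1})=0$, $\delta_2(F)=(K-K^{-1})/(q-q^{-1})$; a direct check shows these are well defined and reproduce the relations of Definition \ref{first def}, with PBW basis $\{K^iF^nE^m:i\in\mathbb{Z},\ n,m\geq0\}$. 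The strategy is to apply Theorem \ref{main theorem} twice. For the base ring $\mathbb{C}[K,K^{-1}]=\mathcal{O}^{alg}(\mathbb{C}^{\times})$ the Arens-Michael envelope is $\mathcal{O}^{hol}(\mathbb{C}^{\times})$, which I would present concretely as $\{\sum_{i\in\mathbb{Z}}c_iK^i:\|\cdot\|_\rho<\infty\text{ for all }\rho\geq1\}$ with $\|\sum_ic_iK^i\|_\rho=\sum_i|c_i|\rho^{|i|}$; these seminorms are submultiplicative for $\rho\geq1$ and generate the compact-open topology (each $\|\cdot\|_\rho$ is squeezed between two circular sup-norms using Cauchy estimates). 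Because $|q^{\pm2}|=1$, the extended endomorphism $\widehat{\alpha_1}$ acts by $c_i\mapsto q^{2i}c_i$ and is a $\|\cdot\|_\rho$-isometry, so $\{\widehat{\alpha_1},\widehat{\delta_1}=0\}$ is $m$-localizable; the first application of Theorem \ref{main theorem} identifies $\widehat{R_1}$ with $\mathcal{O}(\mathbb{C},\widehat{\mathbb{C}[K,K^{-1}]};\widehat{\alpha_1},0)$. As a locally convex space this is $\widehat{\mathbb{C}[K,K^{-1}]}\widehat{\otimes}\mathcal{O}(\mathbb{C})$, and since every exponent occurring is nonnegative the natural two-parameter family $\sum_{i,n}|c_{i,n}|\rho^{|i|}\sigma^n$ collapses — by the ``take the maximum'' trick of Lemma \ref{equiv} — to the one-parameter family $\|\sum_{i,n}c_{i,n}K^iF^n\|_\tau=\sum_{i,n}|c_{i,n}|\tau^{|i|+n}$, $\tau\geq1$, which again consists of submultiplicative seminorms.

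The second application of Theorem \ref{main theorem} is the crux. Using \eqref{commute} and the Leibniz rule one computes, for a PBW monomial,
\[
\delta_2(K^iF^n)=\frac{q^{-2i}}{q-q^{-1}}\Bigl(\Bigl(\sum_{j=0}^{n-1}q^{2j}\Bigr)K^{i+1}F^{n-1}-\Bigl(\sum_{j=0}^{n-1}q^{-2j}\Bigr)K^{i-1}F^{n-1}\Bigr),
\]
and the key observation is that when $|q|=1$ one has $\bigl|\sum_{j=0}^{n-1}q^{\pm2j}\bigr|=\bigl|(q^{\pm2n}-1)/(q^{\pm2}-1)\bigr|\leq 2/|q^2-1|$ \emph{uniformly in $n$}. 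Setting $M:=2/(|q^2-1|\,|q-q^{-1}|)$ and using $|i\pm1|\leq|i|+1$ together with $\tau\geq1$, one obtains $\|\widehat{\delta_2}(c)\|_\tau\leq 2M\|c\|_\tau$ for all $c\in\widehat{R_1}$, while $\widehat{\alpha_2}$ acts by $c_{i,n}\mapsto q^{-2i}c_{i,n}$ and is a $\|\cdot\|_\tau$-isometry. Hence $\{\widehat{\alpha_2},\widehat{\delta_2}\}$ is $m$-localizable on $\widehat{R_1}$, and Theorem \ref{main theorem} identifies $\widehat{U_q(\mathfrak{sl}(2))}$ with $\mathcal{O}(\mathbb{C},\widehat{R_1};\widehat{\alpha_2},\widehat{\delta_2})$. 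A final round of the same bookkeeping as in Lemma \ref{equiv} collapses the three-parameter family $\sum_{i,n,m}|c_{i,n,m}|\tau^{|i|+n}\mu^m$ to the single-parameter family $\|c\|_\rho=\sum_{i,n,m}|c_{i,n,m}|\rho^{|i|+n+m}$; and since $\rho\geq1$ is the binding range, requiring finiteness ``for all $\rho\geq1$'' is the same as ``for all $\rho>0$'', which also makes the condition insensitive to replacing $|i|$ by $i$ in the description of the underlying space. This yields part (2).

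The only genuinely non-formal step is the $m$-localizability of $\{\widehat{\alpha_2},\widehat{\delta_2}\}$, and it rests entirely on the hypothesis $|q|=1$: this is precisely what makes the twist maps isometric and, more importantly, keeps the ``quantum-integer'' coefficients $\sum_{j<n}q^{\pm2j}$ bounded independently of $n$, so that $\widehat{\delta_2}$ preserves the submultiplicative seminorms; for $|q|\neq1$ these grow like $|q|^{2n}$ and one should expect a qualitatively different, non-power-series answer in the spirit of Example \ref{lie matrix}. Everything else — verifying that the stated $\alpha_i$ and $\delta_2$ are well defined and recover Definition \ref{first def}, the explicit description of $\mathcal{O}^{hol}(\mathbb{C}^{\times})$, and the repeated collapsing of multi-parameter seminorm families — is routine and runs parallel to the treatment of the Jordanian plane in \S\ref{JP}.
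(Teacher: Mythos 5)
Your proposal is correct and follows essentially the same route as the paper: part (1) via the tensor decomposition $U'_1(\mathfrak{sl}(2))\cong U(\mathfrak{sl}(2))\otimes\mathbb{C}[K]/(K^2-1)$ combined with Propositions \ref{tensor 1} and \ref{tensor 2}, and part (2) by realizing $U_q(\mathfrak{sl}(2))$ as the iterated Ore extension $\mathbb{C}[K,K^{-1}][F;\alpha,0][E;\alpha',\delta]$ and applying Theorem \ref{main theorem} twice, with $m$-localizability resting on $|q|=1$ making the twists isometric and the quantum integers $\sum_{j<n}q^{\pm 2j}$ uniformly bounded. The only differences are cosmetic (your $\rho^{|i|}$, $\rho\geq 1$ presentation of $\mathcal{O}^{hol}(\mathbb{C}^{\times})$ versus the paper's $\rho^{i}$ for all $\rho>0$, and minor phase-factor bookkeeping in $\delta(K^iF^n)$ that is irrelevant to the estimates).
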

\begin{proof}We split the proof of the theorem into two cases according to how the theorem is stated.

{\it Proof of (1).}  
Definition \ref{second def} allows us to consider the limiting case $q=1$ where our quantum enveloping algebra almost becomes the usual enveloping algebra $U(\mathfrak{sl}(2))$ as we mentioned in \eqref{envel}.

The representation of $U'_1(\mathfrak{sl}(2))$ as the tensor product of two algebras for which the Arens-Michael envelope is already known allows us to quickly compute the Arens-Michael envelope of $U'_1(\mathfrak{sl}(2))$ using the results of \S \ref{tensor}. Specifically, endow $U'_1(\mathfrak{sl}(2))$ with the strongest locally convex topology $\tau_{str}$. By Proposition \ref{tensor 1}, we have the following isomorphism of $\widehat{\otimes}$-algebras $$(U'_1(\mathfrak{sl}(2)), \tau_{str})\cong \left(U(\mathfrak{sl}(2)), \tau_{str}\right) \widehat{\otimes}\left( \frac {\mathbb{C} [K]}{K^2-1},\tau_{str}\right).$$
Next, we apply Proposition \ref{tensor 2} (along with Definition \ref{AM2}) to the above projective tensor product to get  $$\widehat{U'_1(\mathfrak{sl}(2))}\cong \widehat{U(\mathfrak{sl}(2))} \widehat{\otimes}\left( \frac {\mathbb{C} [K]}{K^2-1}\right)^{\widehat{  \  \ }}.$$

Since $\frac{\mathbb{C} [K]}{(K^2-1)}$ is a finite-dimensional vector space, its  Arens-Michael envelope coincides with  $\frac{\mathbb{C} [K]}{(K^2-1)}$. Therefore, we finally get
$$\widehat{U'_1(\mathfrak{sl}(2)} \cong \widehat{U(\mathfrak{sl}(2)} \widehat{\otimes} \frac {\mathbb{C} [K]}{(K^2-1)}.$$
The Arens-Michael envelope $\widehat{ U(\mathfrak{sl}(2)}$ was shown  to be the direct product of matrix algebras in Example \ref{lie matrix}.

{\it Proof of (2).}
For $q \neq 1,-1$, note that the algebra $U_q(\mathfrak{sl}(2))$ is an iterated Ore extension (see \S \ref{Ore}). Namely, start with 
$$A_0= \mathbb{C} [K,K^{-1}]$$along with a $\bb{C}$-linear algebra homomorphism $\alpha_0:A_0 \to A_0$ defined by $$ \alpha_0(K)=q^2K.$$
Next, consider the Ore extension
$$A_1=A_0[F,\alpha_0,\delta_0=0]$$ equipped with a  $\bb{C}$-linear algebra homomorphism $\alpha_1$ and a $\bb{C}$-linear differentiation $\delta$ defined by $$ \alpha_1(F^jK^l)=q^{-2l}F^jK^l, \quad  \delta (K)=0, \quad \delta(F^jK^l)
=\sum_{i=0}^{j-1} F^{j-1}  \ \delta_{q^{-2i}K}(F) \ K^l,$$where $ \delta_K(F)= \frac{K-K^1}{q-q^-1}$ is a Laurent polynomial in $K$. Finally, consider the Ore extension$$A_2=A_1[E,\alpha_1,\delta].$$ One easily checks by comparing the above sequence of Ore extensions to Definition \ref{first def} that $$A_2 \cong U_q(\mathfrak{sl}(2)).$$
Therefore, we can apply the results of \S \ref{Ore} to each consecutive Ore extension to calculate the Arens-Michael envelope of $A_2$. 

The Arens-Michael envelope of the algebra $A_0= \mathbb{C} [K,K^{-1}]$ is very simple:  
$$\widehat{A_0}=\{a=\sum \limits_{i \in \mathbb{Z}} a_{i} K^i \quad : \quad \|a\|_\rho=  \sum \limits_{i \in \mathbb{Z}} | a_{i}|  \rho^{i} < \infty   \quad \text{for all} \quad \rho>0\},$$
where $\|a\|_\rho=  \sum \limits_{i \in \mathbb{Z}} | a_{i}|  \rho^{i}$, and the topology is generated by the family $\{ \| \cdot \|_\rho \ : \ \rho \in \bb{R}_{>0} \}$.

After extending $\alpha_0: A_0 \to A_0$  to $\widehat{\alpha_0}:\widehat{A_0} \to \widehat{A_0}$, we check that $\widehat{\alpha_0}$ is indeed $m$-localizable (use $|q|=1$):
$$\|\widehat{\alpha_0} (\sum \limits_{i \in \mathbb{Z}} a_{i} K^i) \|_\rho = \| \sum \limits_{i \in \mathbb{Z}} a_{i} q^{2i}K^i\|_\rho=\sum \limits_{i \in \mathbb{Z}} |a_{i}||q^{2i}| \rho^i =\sum \limits_{i \in \mathbb{Z}} |a_{i}| \rho^i =\|\sum \limits_{i \in \mathbb{Z}} a_{i} K^i \|_\rho.$$
Applying Theorem \ref{main theorem}, we obtain 
$$
\begin{aligned}
\widehat{A_1} &= {\mathcal {O}}(\mathbb{C}, \widehat{A_0}; \widehat{\alpha_0}, 0) \\ &= \{b=\sum \limits_{i \in \mathbb{Z}, n \geq 0} b_{i,n} K^i F^n \quad : \quad \|b\|_\rho < \infty \quad  \text{for any} \quad \rho > 0 \},
\end{aligned}
$$
where $\|b\|_\rho=   \sum \limits_{i \in \mathbb{Z}, n \geq 0} |b_{i,n}| \rho^{i+n}$, and the topology is generated by the family $\{ \| \cdot \|_\rho \ : \ \rho \in \bb{R}_{>0} \}$. 

Finally, the operators $\alpha_1$ and $\delta$ simply extend to $\widehat{A_1}$ by their action on the generators $K$ and $F$. We check that $\{\widehat{\alpha_1}, \widehat{\delta}\}$ is an $m$-localizable family. In the calculations below, we make extensive use of the relations between the generators  $K^i F^n=q^{-2in}F^n K^l$.

$\widehat{\alpha_1}$ is $m$-localizable:
$$
\begin{aligned}
&\| \widehat{\alpha_1} (\sum \limits_{i \in \mathbb{Z}, n \geq 0} b_{i,n} K^i F^n) \|_\rho = \| \widehat{\alpha_1} (\sum \limits_{i \in \mathbb{Z}, n \geq 0} b_{i,n} q^{-2in} F^n K^i) \|_\rho \\ 
&= \| \sum \limits_{i \in \mathbb{Z}, n \geq 0} b_{i,n} q^{-2in} q^{-2i}F^n K^i \|_\rho = \sum \limits_{i \in \mathbb{Z}, n \geq 0} |b_{i,n}| |q^{-2in}| |q^{-2i}| \rho^{i+n} \\ 
&= \sum \limits_{i \in \mathbb{Z}, n \geq 0} |b_{i,n}| |q^{-2in}|  \rho^{i+n}=\| \sum \limits_{i \in \mathbb{Z}, n \geq 0} b_{i,n} q^{-2in} F^n K^i \|_\rho=\| \sum \limits_{i \in \mathbb{Z}, n \geq 0} b_{i,n}   K^i F^n \|_\rho .
\end{aligned}
$$ 

Before proving that $\widehat{\delta}$ is $m$-localizable, let us perform one auxiliary computation:
$$
\begin{aligned}
&\delta (K^i F^n) =\delta(q^{-2in}F^n K^i)=q^{-2in}\sum_{j=0}^{n-1} F^{n-1}  \ \delta_{q^{-2j}K}(F) \ K^i= \\ 
&= q^{-2in}F^{n-1}K^i \ \frac{1}{q-q^{-1}} \ \left[(K-K^{-1})+(q^{-2}K-q^2 K)+ \ ... \ +(q^{-2(n-1)}K-q^{2(n-1)}K)\right]  \\ 
&=q^{-2in}F^{n-1}K^i \ \frac{1}{q-q^{-1}} \ \left[K(1+q^{-2}+ \ ... \ +q^{-2(n-1)})-K^{-1}(1+q^2+ \ ... \ + q^{2(n-1)} \right]  \\ 
&=q^{-2in}F^{n-1}K^i \ \frac{1}{q-q^{-1}} \ \left[K \left(\frac{1-q^{-2n}}{1-q^{-2}}\right)-K^{-1}\left(\frac{1-q^{2n}}{1-q^2}\right)\right].
\end{aligned}
$$
Now we can show that $\widehat{\delta}$ is $m$-localizable:
\footnotesize
$$
\begin{aligned}
&\| \widehat{\delta} (\sum \limits_{i \in \mathbb{Z}, n \geq 0} b_{i,n} K^i F^n) \|_\rho  = \|  \sum \limits_{i \in \mathbb{Z}, n \geq 0} b_{i,n} \  \delta (K^i F^n )  \|_\rho \\ 
&= \left\|  \sum \limits_{i \in \mathbb{Z}, n \geq 1} b_{i,n}   q^{-2in}F^{n-1}K^i  \frac{1}{q-q^{-1}} \left[K(\frac{1-q^{-2n}}{1-q^{-2}})-K^{-1}(\frac{1-q^{2n}}{1-q^2})\right]  \right\|_\rho  \\
&= \left\|  \sum \limits_{i \in \mathbb{Z}, n \geq 1} b_{i,n}   q^{-2in}\left[F^{n-1}K^{i+1}  \frac{1-q^{-2n}}{(q-q^{-1})(1-q^{-2})}  - F^{n-1}K^{i-1}\frac{1-q^{2n}}{(q-q^{-1})(1-q^2)}\right]  \right\|_\rho  \\
&= \left\|  \sum \limits_{i \in \mathbb{Z}, n \geq 0} \left[b_{i-1,n+1}   q^{-2(i-1)(n+1)}  \frac{1-q^{-2(n+1)}}{(q-q^{-1})(1-q^{-2})}  - b_{i+1,n+1}  q^{-2(i+1)(n+1)} \frac{1-q^{2(n+1)}}{(q-q^{-1})(1-q^2)}\right] F^n K^i \right\|_\rho  \\
&= \sum \limits_{i \in \mathbb{Z}, n \geq 0} \left\rvert  \left[b_{i-1,n+1} \  q^{-2(i-1)(n+1)}  \frac{1-q^{-2(n+1)}}{(q-q^{-1})(1-q^{-2})}  - b_{i+1,n+1} \ q^{-2(i+1)(n+1)} \frac{1-q^{2(n+1)}}{(q-q^{-1})(1-q^2)}\right]  \right\rvert  \rho^{i+n} \\
&\leq \sum \limits_{i \in \mathbb{Z}, n \geq 0}   \left[  \left\rvert b_{i-1,n+1} \  q^{-2(i-1)(n+1)}  \frac{1-q^{-2(n+1)}}{(q-q^{-1})(1-q^{-2})}  \right\rvert  + \left\rvert  b_{i+1,n+1} \ q^{-2(i+1)(n+1)} \frac{1-q^{2(n+1)}}{(q-q^{-1})(1-q^2)} \right\rvert \right] \rho^{i+n}  \\
&\leq\sum \limits_{i \in \mathbb{Z}, n \geq 0}   \left[  |b_{i-1,n+1}|  \left| q^{-2(i-1)(n+1)} \right| \frac{2}{|(q-q^{-1})(1-q^{-2})|}   +  | \ b_{i+1,n+1}|  \left|q^{-2(i+1)(n+1)} \right| \frac{2}{|(q-q^{-1})(1-q^2)|} \right]  \rho^{i+n} \\
&\leq \frac{2}{|(q-q^{-1})(1-q^{-2})|} \sum \limits_{i \in \mathbb{Z}, n \geq 0}   |b_{i,n}| \rho^{i+n} + \frac{2}{|(q-q^{-1})(1-q^{-2})|} \sum \limits_{i \in \mathbb{Z}, n \geq 0}   |b_{i+1,n+1}| \frac{\rho^{i+n+2}}{\rho^2}  \\
&\leq C  \sum \limits_{i \in \mathbb{Z}, n \geq 0}   |b_{i,n}| \rho^{i+n} + \frac{C}{\rho^2} \sum \limits_{i \in \mathbb{Z}, n \geq 0}   |b_{i,n}| \rho^{i+n} \leq (C + \frac{C}{\rho^2}) \left\| \sum \limits_{i \in \mathbb{Z}, n \geq 0} b_{i,n} K^i F^n \right\|_\rho .
\end{aligned}
$$

\normalsize
Applying Theorem \ref{main theorem}, we conclude that the Arens-Michael envelope of $U_q(\mathfrak{sl}(2))$ for ${|q|=1, \quad q \neq 1, -1}$ is 
$$
\begin{aligned}
\widehat{U_q(\mathfrak{sl}(2))} &= \widehat{A_2}={\mathcal {O}}(\mathbb{C}, \widehat{A_1}; \widehat{\alpha_1}, \widehat{\delta}) \\ &=
\{ c = \sum \limits_{i \in \mathbb{Z}, n,m \geq 0} c_{i,n,m} K^i F^n E^m \quad : \quad \|c\|_\rho< \infty \quad  \text{for any} \quad \rho > 0 \},
\end{aligned}
$$
where $\|c\|_\rho=  \sum \limits_{i \in \mathbb{Z}, n,m \geq 0} |c_{i,n,m}| \rho^{i+n+m}$, and the topology is generated by the family $$\{ \| \cdot \|_\rho \ : \ \rho \in \bb{R}_{>0} \}.$$ 
\end{proof}

\begin{remark}The case $|q| \neq 1$ was completely open at the time of the publication of \cite{Ped15} in 2015, and, to the best knowledge of the author, remains an open question in 2020.  However, see \cite[\S 4]{Kos17} for a recent discussion of the possible approaches for this case.\end{remark}




\begin{thebibliography}{}

\bibitem[Hel89]{Hel89}
\foreignlanguage{russian}{Хелемский А.Я.}
\newblock \foreignlanguage{russian}{Банаховы и
  полинормированные алгебры: общая теория,
  представления, гомологии}.
  \newblock \foreignlanguage{russian}{\it М., Наука}, 1989.



\bibitem[Kas12]{Kas94}
Christian Kassel.
\newblock {\em Quantum groups}, volume 155.
\newblock Springer Science \& Business Media, 2012.

\bibitem[Kos17]{Kos17}
Petr Kosenko.
\newblock The {A}rens-{M}ichael envelopes of {L}aurent {O}re extensions.
\newblock {\em \href{https://arxiv.org/abs/1712.06178}{arXiv:1712.06178}}, 2017.

\bibitem[Ped15]{Ped15}
Dmitrii Pedchenko.
\newblock The {A}rens-{M}ichael envelopes of the {J}ordanian plane and
  ${U}_q(\mathfrak{sl}(2))$.
\newblock \href{http://lms.hse.ru/ap_service.php?getwork=1&guid=477CD65A-1DD0-4605-A42%
2-9CA2231BE0C4}{Senior Thesis at HSE University}, 2015.

\bibitem[Pir03]{Pir04}
A.Yu. Pirkovskii.
\newblock Stably flat completions of universal enveloping algebras.
\newblock {\em \href{https://arxiv.org/abs/math/0311492}{arXiv:0311492}}, 2003.
  
\bibitem[Pir08a]{Pir08}
\foreignlanguage{russian}{Пирковский А.Ю.}
\newblock \foreignlanguage{russian}{Алгебры голоморфных
  функций на некоммутативных аффинных
  многообразиях}. 2008.

\bibitem[Pir08b]{Pir08b}
\foreignlanguage{russian}{Пирковский А.Ю.}
\newblock \foreignlanguage{russian}{Оболочки Аренса-Майкла,
  гомологические эпиморфизмы и
  относительно квазисвободные алгебры}.
\newblock \foreignlanguage{russian}{\it Труды Московского Математического Общества, Том 69:34-125}, 2008.

\bibitem[Shi18]{Shi05}
Evgenij Shirikov.
\newblock Two-generated graded algebras.
\newblock {\em Algebra and Discrete mathematics}, 4(3), 2018.

\bibitem[Tay72]{Tay72}
Joseph Taylor.
\newblock Homology and cohomology for topological algebras.
\newblock {\em Advances in Mathematics}, 9(2):137--182, 1972.

\end{thebibliography}
\end{document}